\numberwithin{equation}{section}
\newtheorem{proposition}{Proposition}[section]
\newtheorem{definition}{Definition}[section]
\newtheorem{lemma}{Lemma}[section]
\newtheorem{theorem}{Theorem}[section]
\newtheorem{corollary}{Corollary}[section]
\newtheorem{remark}{Remark}[section]
\let\pa=\partial
\def\C{\mathop{\mathbb C\kern 0pt}\nolimits}
\def\DD{\mathop{\bf D\kern 0pt}\nolimits}
\def\K{\mathop{\bf K\kern 0pt}\nolimits}
\def\N{\mathop{\mathbb N\kern 0pt}\nolimits}
\def\Q{\mathop{\bf Q\kern 0pt}\nolimits}
\newcommand{\beq}{\begin{equation}}
\newcommand{\eeq}{\end{equation}}
\newcommand{\ben}{\begin{eqnarray}}
\newcommand{\een}{\end{eqnarray}}
\newcommand{\beno}{\begin{eqnarray*}}
\newcommand{\eeno}{\end{eqnarray*}}
\newcommand{\Extend}[5]{\ext@arrow0099{\arrowfill@#1#2#3}{#4}{#5}}
\begin{document}
\title[Defocusing energy-critical wave with cubic convolution ]{The Defocusing energy-critical  wave Equation with a Cubic Convolution}

\author{Changxing Miao}
\address{Institute of Applied Physics and Computational Mathematics, P. O. Box 8009,\ Beijing,\ China,\ 100088;}
\email{miao\_changxing@iapcm.ac.cn}

\author{Junyong Zhang}
\address{Department of Mathematics, Beijing Institute of Technology, Beijing 100081}
\email{zhang\_junyong@bit.edu.cn}

\author{Jiqiang Zheng}
\address{The Graduate School of China Academy of Engineering Physics, P. O. Box 2101, Beijing, China, 100088}
\email{zhengjiqiang@gmail.com}

\maketitle

\begin{abstract}
In this paper, we study the theory of the global well-posedness and
scattering for the energy-critical wave equation with a cubic
convolution nonlinearity $u_{tt}-\Delta u+(|x|^{-4}\ast|u|^2)u=0$ in
spatial dimension $d \geq 5$.  The main difficulties are the absence
of the classical finite speed of propagation (i.e. the monotonic
local energy estimate on the light cone), which is a fundamental
property to show the global well-posedness and then to obtain
scattering for the wave equations with the local nonlinearity
$u_{tt}-\Delta u+|u|^\frac4{d-2}u=0$. To compensate it, we resort to
the extended causality and utilize the strategy derived from
concentration compactness ideas. Then, the proof of the global
well-posedness and scattering is reduced to show the nonexistence of
the three enemies: finite time blowup; soliton-like solutions and
low-to-high cascade. We will utilize the Morawetz estimate, the
extended causality and the potential energy concentration to
preclude the above three enemies.
\end{abstract}

\begin{center}
 \begin{minipage}{120mm}

   { \small {\bf Key Words:}
      {wave-Hartree equation, Concentration compactness,  Morawetz estimate, Extended causality,  Scattering.}
   }\\
    { \small {\bf AMS Classification:}
      { Primary 35P25.~ Secondary 35B40, 35Q40, 81U99.}
      }
 \end{minipage}
 \end{center}

\section{Introduction}
This paper is devoted to study the global well-posedness (GWP) and
scattering for the defocusing energy-critical wave equation with a
cubic convolution (wave-Hartree)
\begin{equation} \label{equ1}
\left\{ \aligned
    &\ddot u - \Delta u    +f(u)=0 , ~~(t,x)\in\R\times\R^d,~~d\geq 5\\
     &(u(0),\dot u(0))=(u_0(x),u_1(x))\in\dot{H}^1(\R^d)\times L^2(\R^d),
\endaligned
\right.
\end{equation}
 where  $u(t,x)$ is a real-valued function in spacetime
$\mathbb{R} \times \mathbb{R}^d$, $f(u)=\big(V(\cdot)*|u|^2\big)u$
with $V(x)=|x|^{-4}$,  the dot denotes the time derivative and $*$
stands for the convolution in $\mathbb{R}^{d}$.

The terminology ``Energy-critical" is due to the fact that both the
energy $E(u,\dot{u})$ defined by
\begin{align}\label{eq1.4}
    E(u,\dot{u}):=\frac12\int_{\R^d}\Big(|\nabla u(x)|^2+|
\dot{u}(x)|^2\Big)dx+\frac14\iint_{\R^{d}\times\R^d}\frac{|u(y)|^2|u(x)|^2}{|x-y|^{4}}dxdy
\end{align}
and the equation \eqref{equ1} itself are invariant under the
rescaling symmetry
\begin{equation}\label{scale}
u(t,x)\mapsto \lambda^{\frac{d-2}{2}}u(\lambda t,\lambda x),
\end{equation}
for $\lambda>0.$ Note that the energy is conserved by the flow
\eqref{equ1}, hence we do not specify time in the notation.

On one hand, the scattering theory for the energy critical wave
equation \begin{equation}\label{energy} \ddot u - \Delta u
+\mu|u|^{2^*-2}u=0,~2^\ast={2d}/{(d-2)},\end{equation} has been
intensively studied in \cite{ BG,BCLPZ, Gri90, Kapi94,KM1,ShaStr94,
 T07}. When $\mu=1$, which corresponds to the
defocusing case, the theory of the global well-posedness and
scattering has been studied by Grillakis \cite{Gri90}, Kapitanski
\cite{Kapi94}, Shatah-Struwe\cite{ShaStr94},
Bahouri-G$\acute{e}$rard \cite{BG}, Tao \cite{T07} and the
references cited therein. In particular, Tao in \cite{T07} derived a
exponential type spacetime bound. The analogs for 3D quintic
Schr\"odinger equation have been established by Colliander, Keel,
Staffilani, Takaoka, and Tao \cite {CKSTT07} . Thereafter,
Killip and Visan \cite{KV2011} gave alternant proof for the  3D
quintic Schr\"odinger equation. We also refer to  Ryckman-Visan
\cite{RV2007}, Visan\cite{Visan2007,Visanphd,Visan2011} for the
defocusing energy-critical Schr\"odinger equation in dimensions
$d\geq4$.

For \eqref{energy} in the focusing case: $\mu=-1$, and $3\leq d\leq
5$, recently Kenig and Merle \cite{KM1} employed the sophisticated
``concentrated compactness " and ``rigidity" method to obtain the
dichotomy-type result under the assumption that $E (u_0, u_1) < E
(W, 0)$, where $W$ denotes the ground state of the elliptic equation
$$\Delta W+|W|^\frac4{d-2}W=0.$$ Thereafter,  \cite{BCLPZ}
extend the above result in \cite{KM1} to higher dimensions. The
analogs for the energy-critical focusing nonlinear Schr\"{o}dinger
equation in the radial case for dimensions $3$ and $4$ have been
established by Kenig and Merle \cite{KM}. While we refer to Killip
and Visan \cite{KV} for the focusing energy-critical Schr\"odinger
equation in dimensions $d\geq5$.

On the other hand, the scattering theory for the Hartree equation
$$i\dot{u}=-\Delta u+\mu(|x|^{-\gamma}*|u|^2)u$$
has been also  studied by many authors (see
\cite{GiV00,LiMZ09,MXZ07a}). For the subcritical cases in the
defocusing case (i.e. $2\leq\gamma<\min\{4,d\}$ and $\mu=1$) ,
Ginibre and Velo \cite{GiV00} derived the associated Morawetz
inequality and extracted an useful Birman-Solomjak type estimate to
obtain the asymptotic completeness in the energy space. Nakanishi
\cite{Na99d} improved the results by establishing a new Morawetz estimate. For
the critical case ($\gamma=4,~d\geq 5$), Miao, Xu and Zhao
\cite{MXZ07a} took advantage of a new kind of the localized Morawetz
estimate to rule out the possibility of the energy concentration at
origin and established the scattering results in the energy space
for the radial data in dimension $d\geq 5$.

For the equation $(\ref{equ1})$ with $V(x)=|x|^{-\gamma}$, using the
ideas of Strauss \cite{St81a,St81b}  and Pecher \cite{Pe85},
Mochizuki \cite{Mo89} showed that if $d \geq 3$, $2\leq \gamma <
\min\{d, 4\}$, then the global well-posedness and scattering results
with small initial data hold in the energy space $H^1(\R^d)\times
L^2(\R^d)$.  One may also refer to \cite{MZ} which study a complete
scattering theory of the Klein-Gordon equation with a cubic
convolution for large data in the subcritical case. This paper is
devoted to study a complete scattering theory of the equation
$(\ref{equ1})$ for the critical case (i.e. $\gamma=4,~d\geq5$) in
the energy space $\dot{H}^1_x(\R^d)\times L^2_x(\R^d)$.

\vskip 0.2in

Our main result is the following global well-posedness and
scattering result in the energy space.

\begin{theorem}\label{thm1.1}
Let $d\geq 5$ and $(u_0, u_1)\in \dot{H}^1(\R^d)\times L^2(\R^d)$ be
initial data with energy bound
\begin{equation}
E(u_0,u_1)\leq E
\end{equation}
for any constant $E>0$. Then there exists a unique global solution
$u(t)$ to \eqref{equ1}. Moreover the solution scatters in the sense that
 there exists  solution $v_\pm$ of the free wave equation
 \begin{equation}\label{le}
    \ddot{v} - \Delta v   =  0
\end{equation}
with $(v_\pm(0), \dot{v}_\pm(0))\in \dot H^1(\R^d)\times L^2(\R^d)$ such that
\begin{equation}\label{1.2}
\big\|\big(u(t), \dot{u}(t)\big)-\big(v_\pm(t),
\dot{v}_\pm(t)\big)\big\|_{\dot H^1_x\times L^2_x} \longrightarrow
0,\quad \text{as}\quad t\longrightarrow \pm\infty.\end{equation}
\end{theorem}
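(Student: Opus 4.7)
The plan is to follow the concentration-compactness / rigidity scheme of Kenig--Merle, adapted to the nonlocal cubic convolution nonlinearity. The first step is to establish a small-data global scattering theory and a long-time perturbation (stability) lemma for \eqref{equ1} via the standard Strichartz estimates for the free wave equation. Applying Hardy--Littlewood--Sobolev to the convolution $|x|^{-4}*|u|^2$ together with H\"older in the remaining variables, one bounds $f(u)$ in a dual Strichartz norm by a cubic power of $\|u\|_{S(I)}$ multiplied by a Strichartz-controlled factor. This yields a contraction for small-energy data and a stability theorem that tolerates small $\dot H^1 \times L^2$ perturbations of near-solutions with a priori bounded $S$-norm.

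Assuming the conclusion of Theorem \ref{thm1.1} fails, I would set $E_c := \sup\{E : S_\R(u)<\infty \text{ for every solution with } E(u,\dot u)\le E\}$, which is positive and finite by the previous step. Choose a sequence $u_n$ of non-scattering solutions with $E(u_n,\dot u_n)\downarrow E_c$. Apply a Bahouri--G\'erard profile decomposition to $(u_n(0),\dot u_n(0))$, develop each linear profile through the nonlinear flow of \eqref{equ1}, and sum. The stability lemma combined with energy decoupling between profiles forces exactly one profile to survive. This produces a maximal-lifespan critical element $u_c$ with $E(u_c,\dot u_c)=E_c$, $S(u_c)=\infty$, whose trajectory $\{(u_c(t),\dot u_c(t))\}$ is precompact in $\dot H^1 \times L^2$ modulo the scaling--translation symmetry group \eqref{scale}.

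A routine reduction using this compactness produces a modulation scale $N(t)$ and a spatial center $x(t)$, and classifies $u_c$ into exactly the three scenarios announced in the abstract: a finite-time blow-up with $N(t)\to\infty$ as $t\to T^\ast$; a soliton-like solution, global with $N(t)\equiv 1$ and bounded $x(t)$; and a low-to-high frequency cascade, global with $N(t)\to\infty$ as $t\to\infty$. The proof then concludes by ruling out each of these three enemies.

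The main obstacle is this final rigidity step, because classical finite speed of propagation is unavailable: the Hartree potential $V*|u|^2$ has full support, so the local energy flux on truncated light cones is not sign-definite. The two global scenarios are to be ruled out by pairing the wave-Hartree Morawetz estimate with the precompactness of $u_c$, which forces $u_c\equiv 0$ and contradicts $E_c>0$. The finite-time blow-up scenario is the most delicate and must be handled via the paper's \emph{extended causality} estimate --- a quantitative substitute for finite speed of propagation that controls the potential-energy leakage of $u_c$ outside shrinking balls about the would-be blow-up point --- combined with potential-energy concentration inside those balls, yielding a contradiction to the uniform energy bound. Producing an extended causality inequality in a form strong enough to match the compactness of the critical element is where the bulk of the technical effort will go.
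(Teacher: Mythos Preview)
Your overall architecture (small-data theory, stability, profile decomposition, critical element, three-scenario reduction) matches the paper's. But your allocation of the key analytic tools to the three enemies is miscalibrated, and this is not cosmetic.

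For the two global scenarios you plan to use only the Morawetz estimate plus precompactness. This does not close. The Morawetz inequality has the fixed weight $1/|x|$ centered at the origin, whereas compactness concentrates $u_c(t)$ near $x(t)$, and in the standard three-enemies reduction nothing bounds $x(t)$ a priori (your assertion that the soliton has ``bounded $x(t)$'' is exactly what requires proof). In the paper the extended causality is used for \emph{all} scenarios, not just blow-up: it yields $|x(t_1)-x(t_2)|\le |t_1-t_2|+C N(t_1)^{-1}+C N(t_2)^{-1}$, which, after translating $x(0)=0$, makes $|x|\lesssim 1+t$ on the support region. Only then can one combine the Morawetz upper bound with the potential-energy concentration lower bound (Proposition~\ref{pec}) to get $C(E)\gtrsim \sum_{k\le T}\tfrac{1}{1+k}\to\infty$. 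You also omit the potential-energy concentration step for the global cases; bare precompactness does not by itself produce the needed lower bound on $\int |u|^{2^*}/|x|\,dx$, because $\|u(t)\|_{L^{2^*}}$ can vanish on a large set of times.

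For the finite-time blow-up case your proposed endgame (``contradiction to the uniform energy bound'') does not work: having compact support in shrinking balls together with potential-energy concentration in those balls is perfectly compatible with bounded energy. The paper's contradiction again comes from the Morawetz estimate: compact support in $B(0,1-t)$ gives $1/|x|\gtrsim 1/(1-t)$ on the support, and potential-energy concentration on dyadic blocks $[1-2^{-k},1-2^{-(k+1)}]$ yields $C(E)\gtrsim \int_{1/2}^{T}(1-t)^{-1}\,dt\to\infty$. So the triad Morawetz + extended causality + potential-energy concentration is used uniformly across all three enemies; you should redistribute accordingly.
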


As we know,  there is no pointwise criteria for the critical
problem, GWP and scattering result are simultaneously solved in
general. However, the study history of the $\dot H^1$-critical wave
equation shows us scattering result is later than global
well-posedness.

Now, we recall the history of the $\dot H^1$-critical wave equation
\eqref{energy} with $\mu=1$. In \cite{Gri90,Kapi94,ShaStr94}, by the
finite propagation speed of wave equation, they considered the
Cauchy problem with compact data. And without loss of generality,
one can assume the solution is smooth. They showed the existence of
the global smooth solution by ruling out the accumulation of the
energy at any time, where they utilized the classical finite speed
of propagation (i.e. the monotonic local energy estimate on the
light cone)
\begin{align}\label{}
\int_{|x|\leq R-t}e(t,x)dx\leq \int_{|x|\leq R }e(0,x)dx,~~t>0
\end{align}where\begin{align}\label{}
  e(t,x):=\frac12  | \dot{u}
 |^2  + \frac12  | \nabla u  |^2   + \frac1{2^\ast} |
u|^{2^\ast},
\end{align}
which is a fundamental property for the wave equations with local
nonlinearities. By compactness argument, one can show the global
existence and uniqueness of the energy solution. While the
scattering result of the energy solution was solved ten years later
\cite{BG} by making use of the concentration compactness idea. It
also depends heavily on the monotonic local energy estimate on the
light cone.

 However, the wave-Hartree lacks the classical finite speed of propagation. The nonlocal property of Hartree term
cause the essential difficulties for nonlinear pointwise estimates,
this defeats our attempts to establish the same classical finite
speed of propagation as above. As a substitute,  one may resort to
the causality (Theorem 3 in Menzala-Strauss \cite{MeS82}), however
it holds only for the case $V\in L^{d/3}+L^\infty$,  which does not
contain the energy critical case ($V(x)=|x|^{-4}$), the exponent $d/3$ stems from the
estimate of the term $\int u_t u(V*u^2)dx$ as we know that this term
cannot be controlled by the energy if $V\in L^p_x(\R^d)$ when
$p<{\frac{d}{3}}.$ To overcome it, we make use of the  finite speed
of propagation of the free operators $K(t)$ and $\dot K(t)$ and the
boundness of the local-in-time Strichartz estimate of the solution
(the nonlinear interaction is actually the linear feedback), to
establish the causality for the energy critical case $V\in
L^{\frac{d}{4}-}+L^\infty$. See the detail in Subsection 2.3.

Since the wave-Hartree  lacks the classical finite speed of
propagation, we will not utilize the classical methods in
\cite{Gri90, Kapi94,ShaStr94} to prove the GWP first and then
scattering for the wave equation with local nonlinearity. While,
inspired by the strategy derived from concentration compactness
ideas \cite{KM,KM1,KV11}, we will show GWP and scattering result simultaneously. We
remark that the method here also works for the local nonlinearity
$f(u)=|u|^\frac4{d-2}u$.

\vskip 0.1in Other than the classical finite speed of propagation,
 it is also not easy to verify that the Hartree nonlinearity
satisfies some positive properties, e.g. $G(u)\geq 0$
($G(u)=f(u){\bar u}-2\int_0^{|u|}f(r)dr$), and  it plays an important
role in establishing some
 Morawetz-type estimates
 in \cite{Na99b}. We overcome this
difficulty by using the symmetry property of $V(x)$ and also
establish Morawetz-type estimate  by borrowing some strategies from
\cite{Na99e}.

\vspace{0.2cm}

\subsection{Sketch the proof of Theorem \ref{thm1.1}} Let  $I$ be the maximal lifespan of the solution $u$ to \eqref{equ1}; see \cite{KV11} for the definition of solution.
Define the scattering size \begin{equation}\label{scattersize}
S_I(u)=\|u\|_{L_{t,x}^\frac{2(d+1)}{d-2}(I\times\R^d)}\triangleq
\|u\|_{S(I)}.
\end{equation}
For each $E>0$, let us define
$\Lambda(E)$ to be the quantity
\begin{align*}
\Lambda(E)&=\sup\Big\{\big\|u\big\|_{S(I)};~E(u,u_t)\leq E\Big\}
\end{align*}
where  $u$ ranges over all solutions to \eqref{equ1} on the
spacetime slab $I\times\R^d$ of energy less than $E$ and
$$E_{crit}=\sup\{E: \ \Lambda(E)<+\infty\}.$$

We prove that $E_{crit}=+\infty$ by contradiction argument. If $E_{crit}<+\infty,$ then we will see that
the failure of Theorem \ref{thm1.1} is caused by a special class of
solutions. These solutions must have some good
properties so that they do not exist. Thus we get a contradiction.
For convenience, we recall the definition of almost periodicity modulo
symmetries in \cite{KV}
\begin{definition}\label{apms}
Let $d\geq5,$ and let $u$ to be a solution of \eqref{equ1} with maximal lifespan $I$.  We say that $u$ is
almost periodic modulo symmetries if $(u,u_t)$ is bounded in
$\dot{H}_x^1\times L_x^2$ and there exist functions
$N(t):~I\to\R^+,~x(t):~I\to\R^d$ and $C(\eta):\R^+\to\R^+$ such that
for all $t\in I$ and $\eta>0$,
\begin{equation}\label{apss}
\int_{|x-x(t)|\geq\frac{C(\eta)}{N(t)}}\big(|\nabla
u(t,x)|^2+|u_t(t,x)|^2\big)dx\leq\eta
\end{equation}
and
\begin{equation}\label{apsf}
\int_{|\xi|\geq C(\eta)N(t)}\big(|\xi|^2|\hat u(t,\xi)|^2+|\hat
u_t(t,\xi)|^2\big)d\xi\leq\eta.
\end{equation}
Here $N(t)$ is called the frequency scale function,  $x(t)$ is called the spatial center function, and
$C(\eta)$ is called the compactness modulus function.
\end{definition}
\begin{remark}\label{rem1.1}
As a consequence of Ascoli-Arzela Theorem, $u$ is almost periodic modulo symmetry if
and only if the set
$$\Big\{\big(N(t)^{-\frac{d-2}2}u,~N(t)^{-\frac{d}2}u\big)\Big(t,x(t)+\frac{x}{N(t)}\Big),~t\in
I\Big\}$$ is a compact set in $\dot{H}^1_x\times L_x^2$. If $u$ is almost
periodic modulo symmetries, then
\begin{equation}\label{apsp}
\int_{|x-x(t)|\geq\frac{C(\eta)}{N(t)}}|u(t,x)|^\frac{2d}{d-2}dx\leq\eta.
\end{equation}

\end{remark}
To prove
Theorem \ref{thm1.1}, we recall that
\begin{theorem}[Reduction to almost periodic solutions]\label{reduce} Assume $E_{crit}<+\infty.$ Then there exists a
maximal-lifespan solution $u:~I\times\R^d\to\R$ to \eqref{equ1} such
that
\begin{enumerate}
\item $u$ is almost periodic modulo symmetries;

\item $u$ blows up both forward and backward in time;

\item $u$ has the minimal kinetic energy among all blowup solutions. More precisely, let
$v:~J\times\R^d\to\R$ be a maximal-lifespan solution which blows up
in at least one time direction, then
$$\sup_{t\in I}\big\|\big(u(t),u_t(t)\big)\big\|_{\dot{H}^1_x\times
L^2_x}\leq\sup_{t\in
J}\big\|\big(v(t),v_t(t)\big)\big\|_{\dot{H}^1_x\times L^2_x}.$$
\end{enumerate}
Here $u$ blows up forward in time means that there exists a time
$t_0\in I$ such that $S_{[t_0,\sup I)}(u)=+\infty$; similarly, $u(t,x)$
blows up backward in time  means that $S_{(\text{inf}~ I,t_0]}(u)=+\infty$.

\end{theorem}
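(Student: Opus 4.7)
The plan is to follow the standard Kenig--Merle concentration-compactness scheme adapted to the wave-Hartree setting. Starting from the hypothesis $E_{crit}<+\infty$, I pick a minimizing sequence of maximal-lifespan solutions $u_n:I_n\times\R^d\to\R$ with $E(u_n,\partial_t u_n)\to E_{crit}$ and $S_{I_n}(u_n)\to\infty$. Up to a time translation I may assume $0\in I_n$ and that half of the scattering mass lies to each side of $t=0$, so that the initial data $(u_n(0),\partial_t u_n(0))$ is bounded in $\dot H^1_x\times L^2_x$ by the energy bound.

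The key tool is the Bahouri--Gérard linear profile decomposition in $\dot H^1_x\times L^2_x$ for the free wave flow: along a subsequence one can write
\[
(u_n(0),\partial_t u_n(0))=\sum_{j=1}^{J}\Gamma_n^{j}\bigl(V^j(0),\partial_t V^j(0)\bigr)+\bigl(w_n^J(0),\partial_t w_n^J(0)\bigr),
\]
where the $V^j$ solve the free wave equation \eqref{le}, the $\Gamma_n^j$ implement the rescaling/translation symmetries associated to parameters $(\lambda_n^j,x_n^j,\tau_n^j)$ with pairwise asymptotic orthogonality, and the remainder satisfies $\limsup_{n}\|w_n^J\|_{L_{t,x}^{2(d+1)/(d-2)}}\to 0$ as $J\to\infty$. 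One obtains asymptotic decoupling of $\dot H^1_x\times L^2_x$ norms and of the full energy; the potential piece decouples along orthogonal profiles thanks to the dilation/translation invariance of $V(x)=|x|^{-4}$ together with Hardy--Littlewood--Sobolev. To each linear profile I attach a nonlinear profile $U^j$, solving \eqref{equ1} by either a small-data Cauchy argument (when $\tau_n^j/\lambda_n^j$ stays bounded) or a wave-operator construction (when $\tau_n^j/\lambda_n^j\to\pm\infty$). If at least two profiles were nontrivial, energy decoupling would give each profile strictly less energy than $E_{crit}$, forcing each $U^j$ to have globally finite scattering size. A long-time perturbation lemma for the wave-Hartree equation, obtained by iterating the local stability statement over finitely many time intervals using Strichartz estimates and Hardy--Littlewood--Sobolev on the Hartree cross-terms, would then yield $S_{I_n}(u_n)\lesssim 1$, contradicting $S_{I_n}(u_n)\to\infty$. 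Hence exactly one profile $V^1$ is nontrivial and $(w_n^1(0),\partial_t w_n^1(0))\to 0$ in $\dot H^1_x\times L^2_x$. The associated nonlinear solution $u_c\eqdefa U^1$ realizes $E(u_c,\partial_t u_c)=E_{crit}$, blows up in scattering size, and its full orbit is precompact in $\dot H^1_x\times L^2_x$ modulo the symmetry group $\Gamma$, which is precisely the definition of almost periodic modulo symmetries (using Remark~\ref{rem1.1}). The double-sided blowup in (2) follows by contradiction: if $u_c$ had finite scattering size forward in time it would scatter, and using $u_c$ as a background in the same perturbation lemma would force $S_{I_n}(u_n)<\infty$ forward in time, contrary to our choice of $t_n=0$. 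For (3), the extraction can be refined by a second minimization, choosing the sequence $u_n$ to also minimize $\sup_{t\in I_n}\|(u_n,\partial_t u_n)\|_{\dot H^1_x\times L^2_x}$ asymptotically among blowup solutions with energy approaching $E_{crit}$; asymptotic decoupling then transfers this minimality to $u_c$.

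The main obstacle is the nonlinear stability / long-time perturbation lemma in the presence of the non-local Hartree interaction: one needs Strichartz bounds on cross terms of the form $(V*(uv))u$ and $(V*|u|^2)v$ that are uniform under the rescaling/translation symmetries $\Gamma_n^j$ and compatible with the asymptotic orthogonality of parameters. Once this technical ingredient is in place, together with the decoupling of the energy (including the Hartree piece) along orthogonal profiles and the Bahouri--Gérard decomposition itself, the Kenig--Merle template of \cite{KM1,KM2} delivers $u_c$ with properties (1)--(3) as stated.
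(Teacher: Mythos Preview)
Your proposal is correct and follows exactly the Kenig--Merle concentration-compactness scheme that the paper invokes by citing \cite{KM1,KM2}; the paper does not give its own proof of this theorem, treating the reduction as standard once the equation-specific ingredients are in place. The Hartree-specific technical inputs you flag as the main obstacle---the long-time stability lemma and the Strichartz bounds on the cross terms $(V*(uv))u$ and $(V*|u|^2)v$---are precisely Lemma~\ref{long} and Lemma~\ref{nonle} in the paper, so your outline together with those lemmas constitutes the intended argument.
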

For this standard technique, we refer the reader to Keraani \cite{Ker}, Kenig and Merle \cite{KM} and Killip and Visan \cite{KV11}.

So far, we do not have any control on the frequency scale function
$N(t)$. However, the following theorem shows that no matter how
small the set of minimal kinetic energy blowup solution is, we will
inevitably encounter at least one of the following three enemies.
Thus the proof of Theorem \ref{thm1.1} is reduced to showing the
nonexistence of the three scenarios.
\begin{theorem}[Three enemies, \cite{KV11}]\label{three} Let $d\geq5$ and assume
Theorem \ref{thm1.1} fails, that is, $E_{crit}<+\infty.$
Then there exists a maximal-lifespan solution $u:~I\times\R^d\to\R$,
which is almost periodic modulo symmetries, and $S_I(u)=+\infty.$
In addition, the lifespan $I$ and the frequency
scale function $N(t):~I\to\R^+$ satisfy one of the following three
scenarios:
\begin{enumerate}
\item $($Finite time blowup$)$ $|\text{inf}~I|<+\infty,$ or sup~$I<+\infty.$

\item $($Soliton-like solution$)$ $I=\R$ and $N(t)=1$ for all $t\in\R.$

\item $($Low-to-high cascade$)$ $I=\R$ and
$$\inf_{t\in\R}N(t)\geq1,~\text{and}~\varlimsup_{t\to\infty}N(t)=+\infty.$$
\end{enumerate}
\end{theorem}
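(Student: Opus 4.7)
Starting from the minimal kinetic-energy, almost-periodic blowup solution $u$ produced by Theorem~\ref{reduce}, I would distinguish cases according to the shape of the lifespan $I$ and the behavior of the frequency scale function $N(t)$. If $\sup I<+\infty$ or $\inf I>-\infty$, then $u$ already lies in scenario~(1): Definition~\ref{apms} is invariant under restriction of the lifespan, so almost periodicity and the blowup property are preserved. It therefore remains only to analyze the case $I=\R$, and the analysis splits according to whether $N$ stays bounded and bounded away from zero.

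The crucial step is to rule out $N(t_n)\to 0$ along some sequence, that is, to establish $\inf_{t\in\R}N(t)>0$. Rescaling $u$ at time $t_n$ by $\lambda_n=N(t_n)^{-1}\to+\infty$,
\[
u_n(t,x)=\lambda_n^{-(d-2)/2}\,u\bigl(t_n+t/\lambda_n,\ x(t_n)+x/\lambda_n\bigr),
\]
produces a sequence of solutions to~\eqref{equ1} with the same kinetic energy as $u$. By the precompactness asserted in Remark~\ref{rem1.1}, the initial data $(u_n(0),\partial_t u_n(0))$ converge in $\dot H^1\times L^2$ to some $(v_0,v_1)$, and a stability/perturbation lemma built on the Strichartz estimate and the extended causality from Section~2 produces a limiting solution $v$ that is again almost periodic modulo symmetries and blows up. A careful frequency-localized accounting shows that the piece of $(u(t_n),\partial_t u(t_n))$ supported at frequencies of order $N(t_n)$ is pushed to frequencies $\lesssim 1$ in the limit and is genuinely picked up by $(v_0,v_1)$, while simultaneously the data also retain nontrivial mass at the rescaled frequency; comparing kinetic energies then forces $v$ to have strictly smaller kinetic energy than $u$, contradicting Theorem~\ref{reduce}(3). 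By time reversal the analogous statement holds as $t\to-\infty$, giving $\inf_\R N>0$.

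Once $\inf_\R N>0$ is established, either $\sup_\R N<+\infty$ or $\sup_\R N=+\infty$. In the bounded case one may normalize $\sup_\R N=1$ via the scaling symmetry~\eqref{scale}, and then a second compactness-extraction argument along a sequence $t_n$ with $N(t_n)\to 1$ absorbs the remaining oscillations of $N$ into the symmetry group to yield a solution with $N(t)\equiv 1$, which is scenario~(2). In the unbounded case, after a possible time reversal and a further rescaling to arrange $\inf_\R N\geq 1$, one is left with $\inf_\R N\geq 1$ and $\limsup_{t\to+\infty}N(t)=+\infty$, which is scenario~(3). The principal obstacle throughout is the compactness-and-perturbation machinery used to preclude $N(t_n)\to 0$ and to normalize $N$: the non-local Hartree nonlinearity $(|x|^{-4}*|u|^2)u$ rules out the classical finite-speed-of-propagation-based arguments, so one must rely on the extended causality developed in Section~2 together with an $\dot H^1\times L^2$ stability theory adapted to the convolution nonlinearity. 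With this infrastructure in place, the trichotomy reduces to the now-standard concentration-compactness scheme of \cite{KM,KV,MXZ09b}.
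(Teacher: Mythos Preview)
The paper does not give a detailed proof of this theorem; it simply cites \cite{KV,MXZ09b} and remarks that the reduction from Theorem~\ref{reduce} to the three scenarios is ``essentially combinatorial'' and carries over verbatim to any dispersive equation enjoying the scaling symmetry~\eqref{scale}, once the local theory (Theorem~\ref{local}), the stability lemma (Lemma~\ref{long}), and the local constancy of $N(t)$ (Lemma~\ref{localconstant}) are in place. None of those ingredients uses finite speed of propagation or the extended causality of Lemma~\ref{extend}; causality enters only in Sections~5 and~6, when one needs to control the spatial center $x(t)$ in order to \emph{exclude} the three enemies. Your identification of the extended causality as ``the principal obstacle throughout'' is therefore misplaced for this particular theorem.

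More seriously, your argument for precluding $N(t_n)\to 0$ has a genuine gap. Because $u$ is almost periodic, essentially all of the $\dot H^1\times L^2$ mass of $(u(t_n),\partial_t u(t_n))$ sits at frequencies $\sim N(t_n)$; after rescaling by $\lambda_n=N(t_n)^{-1}$ the data $(u_n(0),\partial_t u_n(0))$ live at frequencies $\sim 1$ and, by Remark~\ref{rem1.1}, converge \emph{strongly} in $\dot H^1\times L^2$ to $(v_0,v_1)$. Consequently the limit $v$ carries exactly the same (scale-invariant) kinetic energy as $u$, not strictly less, and no contradiction with the minimality in Theorem~\ref{reduce}(3) arises. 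The standard route in \cite{KV,MXZ09b} is different: one does not rule out $\inf_{\R} N=0$ for the given solution, but instead performs a case analysis on the oscillation of $N(\cdot)$ over windows of intrinsic length and passes (possibly several times) to rescaling limits, each of which is again a minimal almost-periodic blowup solution, until one lands in one of the three scenarios. Likewise, in the bounded-$N$ case no further limit extraction is needed: if $a\le N(t)\le b$ on $\R$ one may simply redefine $N\equiv 1$ at the cost of enlarging the compactness modulus $C(\eta)$. Your overall case split on the lifespan and on the boundedness of $N$ is in the right spirit, but the mechanisms you invoke at the key steps are not the correct ones.
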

For more detail, see \cite{KV} for Schr\"odinger equation;  and \cite{KV11} for wave equation. We will
utilize the Morawetz estimate, the extended causality and the
potential energy concentration to preclude the above three enemies.
And one can adopt the proof of Lemma 5.18 in \cite{KVnote} to prove
a similar result for wave equation that the almost periodic
solutions satisfy the following local constancy property:

\begin{lemma}[Local constancy]\label{localconstant} Let $u$ be an almost periodic solution
to \eqref{equ1} on $I$. Then there exists $\delta=\delta(u)$ such
that for all $t_0\in I$,
\begin{equation}
[t_0-\delta N(t_0)^{-1},t_0+\delta N(t_0)^{-1}]\subset I
\end{equation}
and
\begin{equation}
N(t)\sim N(t_0)~ \text{uniformly for}~t\in [t_0-\delta
N(t_0)^{-1},t_0+\delta N(t_0)^{-1}],~t_0\in I.
\end{equation}
\end{lemma}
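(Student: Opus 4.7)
The plan is to follow the standard template (cf.\ Lemma 5.18 of \cite{KVnote}) by combining the scaling symmetry \eqref{scale} with a local Cauchy theory on precompact families of data. For a fixed $t_0\in I$, introduce the rescaled solution
\[
u^{(t_0)}(s,y):=N(t_0)^{-(d-2)/2}\,u\bigl(t_0+sN(t_0)^{-1},\,x(t_0)+yN(t_0)^{-1}\bigr),
\]
which again solves \eqref{equ1} because the Hartree nonlinearity $(|x|^{-4}*|u|^2)u$ is invariant under \eqref{scale}. Remark \ref{rem1.1} then says exactly that the family
\[
\mathcal K:=\bigl\{(u^{(t_0)}(0),\,\partial_s u^{(t_0)}(0)):\;t_0\in I\bigr\}\subset \dot H^1_x\times L^2_x
\]
is precompact.

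I would next appeal to local well-posedness of \eqref{equ1} on $\mathcal K$. Using the Strichartz estimate together with Hardy--Littlewood--Sobolev to control the cubic convolution in a Strichartz dual norm, a standard contraction argument for the Duhamel formula \eqref{duhamel} yields a uniform existence time $\delta=\delta(\mathcal K)>0$ and a uniform bound on $\|u^{(t_0)}\|_{S([-\delta,\delta])}$ for every $t_0\in I$. Undoing the rescaling gives $[t_0-\delta N(t_0)^{-1},\,t_0+\delta N(t_0)^{-1}]\subset I$, which is the first claim of the lemma.

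For the comparability $N(t)\sim N(t_0)$, the same local theory together with continuous dependence on the data upgrades $\mathcal K$ to the larger precompact set
\[
\widetilde{\mathcal K}:=\bigl\{(u^{(t_0)}(s),\,\partial_s u^{(t_0)}(s)):\;t_0\in I,\;|s|\leq\delta\bigr\}\subset \dot H^1_x\times L^2_x.
\]
From precompactness of $\widetilde{\mathcal K}$ I read off a uniform frequency window: there exist $0<c<C<\infty$ and, for any prescribed $\eta>0$, a constant $C_\ast(\eta)$ such that every $(v,w)\in\widetilde{\mathcal K}$ concentrates at least $(1-\eta)$ of its $\dot H^1_x\times L^2_x$ norm in the annulus $|\xi|\in[C_\ast(\eta)^{-1},\,C_\ast(\eta)]$. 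Unscaling, $(u(t),u_t(t))$ with $t=t_0+sN(t_0)^{-1}$ has its dominant frequency mass on $[C_\ast(\eta)^{-1}N(t_0),\,C_\ast(\eta)N(t_0)]$; on the other hand the almost-periodicity bound \eqref{apsf} locates the same dominant mass on the interval $[C(\eta)^{-1}N(t),\,C(\eta)N(t)]$. The two intervals must overlap substantially, which forces $N(t)\sim N(t_0)$ uniformly for $|s|\leq\delta$ and proves the second claim.

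The main obstacle is the passage from the precompactness of $\mathcal K$ (which is given) to the precompactness of $\widetilde{\mathcal K}$: this requires a genuine stability / continuous-dependence statement for \eqref{equ1} on compact sets of initial data, extracted from the Strichartz estimate combined with a careful Hardy--Littlewood--Sobolev treatment of the nonlocal term $(|x|^{-4}*|u|^2)u$ in the perturbation equation. Once the local existence time and Strichartz bound are seen to depend only on $\mathcal K$, the remainder of the proof is purely topological.
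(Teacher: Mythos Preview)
Your proposal is correct and follows exactly the route the paper indicates: the paper does not give its own proof of this lemma but simply refers the reader to Lemma~5.18 of \cite{KVnote}, which is precisely the template you have sketched (rescaling via \eqref{scale}, precompactness of the orbit from Remark~\ref{rem1.1}, uniform local existence from the local theory/stability in Lemma~\ref{long}, and then comparison of concentration scales). The only cosmetic point is that \eqref{apsf} alone gives just an upper frequency cutoff; the lower cutoff you invoke comes from combining it with the spatial concentration \eqref{apss}, but this is routine and does not affect the argument.
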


As a direct consequence of Lemma \ref{localconstant},  we have the
lower bound of $N(t)$ in the spirit of Corollary 5.19 in
\cite{KVnote}, which will play an important role in precluding the
finite time blow up solutions.

\begin{corollary}[the lower bound of $N(t)$]\label{finitetime} Let
$u$ be a non-zero maximal-lifespan solution to \eqref{equ1} with
lifespan $I$ that is almost periodic modulo symmetries with
frequency scale function $N(t):~I\to\R^+$. If $T$ is any finite
endpoint of $I$, then
\begin{equation}\label{lbn}
N(t)\geq \frac{C}{|T-t|},
\end{equation}
in particular, $\lim\limits_{t\to T}N(t)=+\infty.$
\end{corollary}
\begin{proof}
See Corollary 5.19 in \cite{KVnote}.
\end{proof}

The paper is organized as follows. In Section $2$, we deal with the
local theory
 for the equation \eqref{equ1} and the extended causality.  In Section $3$, we show the Morawetz estimate. We prove the potential energy concentration
 for the almost periodic solutions in Section $4$. In Section $5$, we preclude the
 global almost periodic solutions to \eqref{equ1} in the sense of
 Theorem \ref{three}. Finally in Section $6$, we exclude the finite
 time blowup solutions to \eqref{equ1} in the sense of Theorem
 \ref{three}.

\subsection{Notations}
Finally, we conclude the introduction by giving some notations which
will be used throughout this paper. To simplify the expression of
our inequalities, we introduce some symbols $\lesssim, \thicksim,
\ll$. If $X, Y$ are nonnegative quantities, we use $X\lesssim Y $ or
$X=O(Y)$ to denote the estimate $X\leq CY$ for some $C$ which may
depend on the critical energy $E_{crit}$ but not on any parameter
such as $\eta$  and $\rho$, and $X \thicksim Y$ to denote the
estimate $X\lesssim Y\lesssim X$. We use $X\ll Y$ to mean $X \leq c
Y$ for some small constant $c$ which is again allowed to depend on
$E_{crit}$. We will sometimes write $a-$ to denote $a-\eta$ for
arbitrarily small $\eta > 0$. We use $C\gg1$ to denote various large
finite constants, and $0< c \ll 1$ to denote various small
constants. For any $r, 1\leq r \leq \infty$, we denote by $\|\cdot
\|_{r}$ the norm in $L^{r}=L^{r}(\mathbb{R}^d)$ and by $r'$ the
conjugate exponent defined by $\frac{1}{r} + \frac{1}{r'}=1$.

The Fourier transform on $\mathbb{R}^d$ is defined by
\begin{equation*}
\aligned \widehat{f}(\xi):= \big( 2\pi
\big)^{-\frac{d}{2}}\int_{\mathbb{R}^d}e^{- ix\cdot \xi}f(x)dx ,
\endaligned
\end{equation*}
giving rise to the fractional differentiation operators
$|\nabla|^{s}$ and $\langle\nabla\rangle^s$,  defined by
\begin{equation*}
\aligned
\widehat{|\nabla|^sf}(\xi):=|\xi|^s\hat{f}(\xi),~~\widehat{\langle\nabla\rangle^sf}(\xi):=\langle\xi\rangle^s\hat{f}(\xi),
\endaligned
\end{equation*} where $\langle\xi\rangle:=1+|\xi|$.
This helps us to define the homogeneous and inhomogeneous Sobolev
norms
\begin{equation*}
\big\|f\big\|_{\dot{H}^s(\R^d)}:= \big\|
|\xi|^s\hat{f}\big\|_{L^2(\R^d)},~~\big\|f\big\|_{{H}^s(\R^d)}:=
\big\|
\langle\xi\rangle^s\hat{f}\big\|_{L^2(\R^d)},~\|f\|_{\dot{H}^{s,p}(\R^d)}=\big\||\nabla|^sf\big\|_{L^p(\R^d)}.
\end{equation*}

We will also need the Littlewood-Paley projection operators.
Specifically, let $\varphi(\xi)$ be a smooth bump function adapted
to the ball $|\xi|\leq 2$ which equals 1 on the ball $|\xi|\leq 1$.
For each dyadic number $N\in 2^{\mathbb{Z}}$, we define the
Littlewood-Paley operators
\begin{equation*}
\aligned \widehat{P_{\leq N}f}(\xi)& :=
\varphi\Big(\frac{\xi}{N}\Big)\widehat{f}(\xi), \\
\widehat{P_{> N}f}(\xi)& :=
\Big(1-\varphi\Big(\frac{\xi}{N}\Big)\Big)\widehat{f}(\xi), \\
\widehat{P_{N}f}(\xi)& :=
\Big(\varphi\Big(\frac{\xi}{N}\Big)-\varphi\Big(\frac{2\xi}{N}\Big)\Big)\widehat{f}(\xi).
\endaligned
\end{equation*}
Similarly we can define $P_{<N}$, $P_{\geq N}$, and $P_{M<\cdot\leq
N}=P_{\leq N}-P_{\leq M}$, whenever $M$ and $N$ are dyadic numbers.
We will frequently write $f_{\leq N}$ for $P_{\leq N}f$ and
similarly for the other operators.

The Littlewood-Paley operators commute with derivative operators,
the free propagator, and the conjugation operation. They are
self-adjoint and bounded on every $L^p_x$ and $\dot{H}^s_x$ space
for $1\leq p\leq \infty$ and $s\geq 0$, moreover, they also obey the
following
 Bernstein estimates
\begin{eqnarray*}\label{bernstein}
\big\||\nabla|^s P_{\leq N} f \big\|_{L^p} & \lesssim  & N^{s}
\big\|
P_{\leq N} f \big\|_{L^p},    \\
\big\| P_{ N} f \big\|_{L^q} & \lesssim &
N^{\frac{d}{p}-\frac{d}{q}} \big\|P_{ N} f \big\|_{L^p},
\end{eqnarray*}
where  $s\geq 0$ and $1\leq p\leq q \leq \infty$.

\section{Preliminaries}

\subsection{The Strichartz estimates}
In this section, we consider the Cauchy problem  for the equation
$(\ref{equ1})$
\begin{equation} \label{equ2.1}
    \left\{ \aligned &\ddot{u} - \Delta u   +  f(u)=  0, \\
    &u(0)=u_0,~\dot{u}(0)=u_1.
    \endaligned
    \right.
\end{equation}

The integral equation for the Cauchy problem $(\ref{equ2.1})$ can be
written as
\begin{equation}\label{inte1}
u(t)=\dot{K}(t)u_0 + K(t)u_1-\int^{t}_{0}K(t-s)f(u(s))ds,
\end{equation}
or
\begin{equation}\label{inte2}
{u(t)\choose \dot{u}(t)} = V_0(t){u_0(x) \choose u_1(x)}
-\int^{t}_{0}V_0(t-s){0 \choose f(u(s))} ds,
\end{equation}
where
$$ V_0(t) = {\dot{K}(t)\quad K(t)
\choose \ddot{K}(t)\quad \dot{K}(t)},\quad
K(t)=\frac{\sin(t\omega)}{\omega},\quad
\omega=\big(-\Delta\big)^{1/2}.$$

The Strichartz estimates involve the following definitions:
\begin{definition}[Admissible pairs]\label{def1}
A pair of Lebesgue space exponents $(q,r)$ are called wave
admissible for $\mathbb{R}^{1+d}$, or denote by $(q,r)\in
\Lambda_{0}$ when $q,r\geq 2$, and
\begin{equation}\label{equ21}
\frac{2}{q}\leq(d-1)\Big(\frac{1}{2}-\frac{1}{r}\Big),~\text{and}\quad(q,r,d)\neq
(2,\infty,3).
\end{equation}

For a fixed spacetime slab $I\times\R^d$, we define the Strichartz
norm
\begin{equation}\label{suoyimdy}
\|u\|_{S^1(I)}:=\sup\big\||\nabla|^\mu u\big\|_{L_t^q
L_x^r(I\times\R^d)},
\end{equation}
 where the supremum is taken over
all admissible pairs $(q,r)\in\Lambda_0$ and numbers $\mu\in[0,1]$
obeying the scaling condition
$\frac1q+\frac{d}{r}=\frac{d}2-(1-\mu).$ We denote $S^1(I)$ to be
the closure of all test functions under this norm.
\end{definition}

Now we recall the following Strichartz estimates.
\begin{lemma}[Strichartz estimates,\cite{GiV95, KeT98,
LS}]\label{strichartzest} Fix $d\geq5.$ Let $I$ be a compact time
interval and  let $u:~I\times\R^d\to\R$ be a solution to the forced
wave equation
$$u_{tt}-\Delta u+F_1+F_2=0.$$
Then for any $t_0\in I$,
\begin{equation*}
\|u\|_{S^1(I)}+\|\pa_tu\|_{L_t^\infty L_x^2(I\times\R^d)}\lesssim
\big\|\big(u(t_0),\pa_t u(t_0)\big)\big\|_{\dot{H}^1\times
L^2}+\big\||\nabla|^\frac12F_1\big\|_{L_{t,x}^\frac{2(d+1)}{d+3}(I\times\R^d)}+\|F_2\|_{L_t^1
L_x^2(I\times\R^d)}.
\end{equation*}

\end{lemma}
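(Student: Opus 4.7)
The plan is to reduce this to the Keel--Tao homogeneous Strichartz estimates for the free wave propagator together with duality and the Christ--Kiselev lemma. First, I would write the Duhamel formula
\begin{equation*}
u(t)=\dot K(t-t_0)u(t_0)+K(t-t_0)u_t(t_0)-\int_{t_0}^t K(t-s)\bigl[F_1(s)+F_2(s)\bigr]\,ds,
\end{equation*}
and handle the three pieces separately. The homogeneous piece is controlled by
$$\||\nabla|^{\mu}[\dot K(t-t_0)u(t_0)+K(t-t_0)u_t(t_0)]\|_{L^q_tL^r_x}\lesssim \|(u(t_0),u_t(t_0))\|_{\dot H^1\times L^2},$$
valid for every $(q,r)\in\Lambda_0$ and $\mu\in[0,1]$ satisfying the scaling relation; this is a direct application of the Keel--Tao estimates for the wave equation, using that $K(t)=\sin(t\omega)/\omega$ has the dispersive decay of the half-wave kernel after removing one derivative.

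For the $F_2$ term, I would insert Minkowski's inequality and apply the homogeneous Strichartz bound to the ``initial data'' $(0,F_2(s))$ at each time slice $s$:
\begin{equation*}
\Bigl\|\int_{t_0}^t K(t-s)F_2(s)\,ds\Bigr\|_{S^1(I)}\le \int_I\bigl\||\nabla|^{\mu}K(\cdot-s)F_2(s)\bigr\|_{L^q_tL^r_x}\,ds\lesssim \|F_2\|_{L^1_tL^2_x(I\times\R^d)}.
\end{equation*}
The accompanying control of $\|\partial_t u\|_{L^\infty_tL^2_x}$ is obtained similarly using that $\partial_t K(t)=\cos(t\omega)$ is an isometry on $L^2_x$.

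The $F_1$ piece is the substantive step. The exponent $\frac{2(d+1)}{d+3}$ is dual to $\frac{2(d+1)}{d-1}$, and the pair $(q_0,r_0)=\bigl(\frac{2(d+1)}{d-1},\frac{2(d+1)}{d-1}\bigr)$ saturates the wave admissibility condition while satisfying the scaling $\frac{1}{q_0}+\frac{d}{r_0}=\frac{d}{2}-\frac12$. Thus the homogeneous Strichartz bound with $\mu=\tfrac12$ reads $\||\nabla|^{1/2}K(t)\phi\|_{L^{q_0}_{t,x}}\lesssim\|\phi\|_{L^2_x}$. Its adjoint gives
\begin{equation*}
\Bigl\|\int_{\R}K(t-s)F_1(s)\,ds\Bigr\|_{\dot H^1_x}+\Bigl\|\partial_t\!\int_{\R}K(t-s)F_1(s)\,ds\Bigr\|_{L^2_x}\lesssim \||\nabla|^{1/2}F_1\|_{L^{\frac{2(d+1)}{d+3}}_{t,x}},
\end{equation*}
and composing once more with the homogeneous estimate yields the full $S^1$ control for the untruncated Duhamel integral. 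To recover the retarded integral $\int_{t_0}^t$, I would apply the Christ--Kiselev lemma, which is legitimate because the Strichartz pair satisfies $q_0>q_0'$. The main obstacle is this last step: verifying that the double-endpoint pair lies strictly off the Christ--Kiselev forbidden diagonal and treating the $|\nabla|^{1/2}$ gain carefully, but this is by now a well-established technique and the argument combines these ingredients in a standard way.
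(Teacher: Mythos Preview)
The paper does not supply its own proof of this lemma; it simply records the Strichartz estimate as a known result and cites \cite{GiV95,KeT98,LS,MC}. So there is no ``paper's proof'' to compare against.

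Your sketch is the standard derivation and is correct in all essential respects. A couple of minor remarks: (i) the verification that Christ--Kiselev applies amounts to checking $q>\tilde q'=\tfrac{2(d+1)}{d+3}$ for every output pair in $S^1(I)$; since $\tfrac{2(d+1)}{d+3}\le\tfrac32$ for $d\ge5$ while every wave-admissible $q$ satisfies $q\ge2$, this is immediate and there is no ``double-endpoint'' obstruction here; (ii) for the $L^\infty_t(\dot H^1_x\times L^2_x)$ piece one does not even need Christ--Kiselev, since at each fixed $t$ the retarded Duhamel integral is a linear functional to which the adjoint homogeneous estimate applies directly. With those two points in hand your outline is complete.
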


Now we give a few basic estimates.

\begin{lemma} [Product rule \cite{CW}]\label{moser}
Let $s\geq0$, and $1<r,p_j,q_j<\infty$ such that
$\frac1r=\frac1{p_i}+\frac1{q_i}~(i=1,2).$ Then,
$$\big\||\nabla|^s(fg)\big\|_{L_x^r(\R^d)}\lesssim\|f\|_{{L_x^{p_1}(\R^d)}}\big\||\nabla|^sg
\big\|_{{L_x^{q_1}(\R^d)}}+\big\||\nabla|^sf\big\|_{{L_x^{p_2}(\R^d)}}\|g\|_{{L_x^{q_2}(\R^d)}}.$$
\end{lemma}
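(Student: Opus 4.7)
The plan is to establish this fractional Leibniz inequality via a Littlewood-Paley paraproduct decomposition. Using the operators $P_N$ from the introduction, I would split
$$fg=\pi_{\mathrm{LH}}+\pi_{\mathrm{HL}}+\pi_{\mathrm{HH}}\eqdef\sum_{M\ll N}f_M\,g_N+\sum_{N\ll M}f_M\,g_N+\sum_{M\sim N}f_M\,g_N,$$
so that $|\nabla|^s(fg)$ decomposes correspondingly. The two off-diagonal terms are frequency localized at the larger input scale, while the diagonal (high-high) term is only localized at scales $\lesssim M\sim N$; this asymmetry dictates the whole difficulty of the argument.

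For the low-high piece, the Fourier support of the $N$-th summand lies in the annulus $|\xi|\sim N$, so by the Littlewood-Paley square-function characterization of $\dot W^{s,r}$,
$$\bigl\||\nabla|^s\pi_{\mathrm{LH}}\bigr\|_{L^r}\lesssim\Bigl\|\Bigl(\sum_N N^{2s}|f_{\ll N}|^2|g_N|^2\Bigr)^{1/2}\Bigr\|_{L^r}.$$
I would use the pointwise bound $|f_{\ll N}|\lesssim \mathcal{M}f$ by the Hardy-Littlewood maximal function, pull that factor outside the square, and then apply H\"older in $L^{p_1}\times L^{q_1}$ together with the Fefferman-Stein vector-valued maximal inequality and the $L^{q_1}$ boundedness of the square function. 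This yields the contribution $\|f\|_{L^{p_1}}\||\nabla|^s g\|_{L^{q_1}}$. The high-low piece $\pi_{\mathrm{HL}}$ is handled by the mirror argument and contributes $\||\nabla|^s f\|_{L^{p_2}}\|g\|_{L^{q_2}}$.

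The high-high term is the main obstacle, because the output frequency $K$ may be much smaller than the input scales and so $|\nabla|^s$ does not automatically produce a gain $M^s$. My plan is to localize the output by writing
$$|\nabla|^s\pi_{\mathrm{HH}}=\sum_K K^s P_K\sum_{\substack{M\sim N\\ M\gtrsim K}}f_M\,g_N,$$
and to bound the inner piece in $L^r$ by H\"older: the $(M\sim N)$-summand is controlled by $\|f_M\|_{L^{p_2}}\|g_N\|_{L^{q_2}}$, with a Bernstein factor on $P_K$ absorbing the mismatch when $K\ll M$. Since $s\ge 0$, the geometric sum in $K\lesssim M$ is dominated by a constant times the $M$-sum, which I would then close by a Cauchy-Schwarz in the dyadic indices and a final application of the square-function characterization. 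This produces the high-high contribution $\||\nabla|^s f\|_{L^{p_2}}\|g\|_{L^{q_2}}$.

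Assembling the three contributions yields the stated estimate. The most delicate point is the bookkeeping of the high-high sum: one must verify that the Bernstein loss is indeed absorbed by the factor $(K/M)^s$ and that every application of Fefferman-Stein and of the square-function equivalence stays within the range $1<p_j,q_j<\infty$ granted by the hypothesis.
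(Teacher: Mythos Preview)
The paper does not supply its own proof of this lemma; it is stated with a citation to Christ--Weinstein \cite{CW} and used as a black box. So there is no in-paper argument to compare against, and your paraproduct outline is in fact the standard modern route to the fractional Leibniz rule. Your treatment of the low-high and high-low pieces is correct and complete: frequency localization at the high input, square-function characterization of $\dot W^{s,r}$, the pointwise bound $|f_{\ll N}|\lesssim\mathcal{M}f$, H\"older, and Fefferman--Stein all apply in the stated range $1<r,p_i,q_i<\infty$.

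The high-high paragraph is essentially right but the phrase ``a Bernstein factor on $P_K$ absorbing the mismatch'' is misleading: Bernstein gives no gain here, and what you actually need is the trivial rewriting $K^{s}=(K/M)^{s}M^{s}$ together with the pointwise estimate $|P_K h|\lesssim\mathcal{M}h$. For $s>0$ the weights $(K/M)^{s}$ are summable over $K\lesssim M$ (this is the Young-in-dyadic-index step you allude to), after which Fefferman--Stein and the bound $\sup_M|g_{\sim M}|\lesssim\mathcal{M}g$ close the estimate as $\||\nabla|^s f\|_{L^{p_2}}\|g\|_{L^{q_2}}$. For $s=0$ that geometric sum diverges, but then the lemma is just H\"older's inequality and no decomposition is needed; you should say so explicitly. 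With those two clarifications your argument is sound.
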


This together with Hardy-Littlewood-Sobolev inequaltiy yields the
following nonlinear estimate.

\begin{lemma}[Nonlinear estimate]\label{nonle}  For $d\geq 5$, we
have
\begin{equation}
\begin{split}
&\Big\||\nabla|^\frac12\Big[\big(|x|^{-4}\ast(uv)\big)u\Big]\Big\|_{L_{t,x}^\frac{2(d+1)}{d+3}(I\times\R^d)}+
\Big\||\nabla|^\frac12\Big[\big(|x|^{-4}\ast|u|^2\big)v\Big]\Big\|_{L_{t,x}^\frac{2(d+1)}{d+3}(I\times\R^d)}\\
\lesssim&\|u\|_{X(I)}^2\|v\|_{X(I)},
\end{split}
\end{equation}
where $X(I)$ is defined to be
\begin{equation}\label{xidy}
X(I)=L_t^{d+1}L_x^\frac{2d(d+1)}{d^2-d-4}\bigcap
L_t^\frac{2(d+1)}{d-1}\dot{W}^{\frac12,\frac{2(d+1)}{d-1}}(I\times\R^d).
\end{equation}
\end{lemma}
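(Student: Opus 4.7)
The plan is to combine the fractional product rule (Lemma \ref{moser}) with the Hardy--Littlewood--Sobolev (HLS) inequality for the Riesz potential $V(x)=|x|^{-4}$, and then close the estimate with H\"older's inequality in space-time against the two components defining $X(I)$.

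First I observe that convolution by $V$ is a Fourier multiplier, so $|\nabla|^{1/2}$ commutes with it. Applying Lemma \ref{moser} to $|\nabla|^{1/2}[(V*(uv))u]$, and a second time to $|\nabla|^{1/2}(uv)$ after commuting, reduces matters to estimating three trilinear pieces of the schematic form
\begin{align*}
(V*(uv))\,|\nabla|^{1/2}u,\qquad \bigl(V*(|\nabla|^{1/2}u\cdot v)\bigr)u,\qquad \bigl(V*(u\cdot|\nabla|^{1/2}v)\bigr)u,
\end{align*}
together with the analogous pieces arising from $|\nabla|^{1/2}[(V*|u|^2)v]$.

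For each such piece, I would place the factor carrying $|\nabla|^{1/2}$ in the second component of $X(I)$, namely $L_t^{2(d+1)/(d-1)}L_x^{2(d+1)/(d-1)}$, and place the two undifferentiated factors in the first component $L_t^{d+1}L_x^{2d(d+1)/(d^2-d-4)}$. To dispose of the convolution I invoke HLS: $\|V*f\|_{L_x^q}\lesssim\|f\|_{L_x^p}$ whenever $1<p<q<\infty$ and $\tfrac1p-\tfrac1q=\tfrac{d-4}{d}$, which is admissible for $d\geq5$. A direct computation then shows that the spatial and temporal H\"older exponents both sum to $(d+3)/(2(d+1))$, matching $L_{t,x}^{2(d+1)/(d+3)}$, while the intermediate HLS exponents lie strictly inside $(1,\infty)$.

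The main obstacle is the exponent bookkeeping in the terms where $|\nabla|^{1/2}$ sits inside the convolution: for $(V*(|\nabla|^{1/2}u\cdot v))u$, pairing $|\nabla|^{1/2}u$ with $v$ by H\"older puts the product in $L_t^2L_x^{d/(d-2)}$, HLS promotes it to $L_t^2L_x^{d/2}$, and only after a final H\"older pairing with the outer $u\in L_t^{d+1}L_x^{2d(d+1)/(d^2-d-4)}$ do the exponents close. Verifying the HLS admissibility $d/(d-2)<d/2$ is precisely what forces the restriction $d\geq5$. Beyond these arithmetic checks, the remaining estimates are routine.
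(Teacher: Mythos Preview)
Your proposal is correct and follows essentially the same route as the paper: apply the fractional product rule, commute $|\nabla|^{1/2}$ through the convolution, use HLS with the exponent relation $1/p-1/q=(d-4)/d$, and close with H\"older using exactly the two space--time norms that define $X(I)$. The paper's proof is organized slightly differently---it applies HLS before the second use of the product rule rather than after---but the exponent choices (in particular $L_t^2L_x^{d/(d-2)}\to L_t^2L_x^{d/2}$ for the inner term and $L_{t,x}^{(d+1)/2}$ for the outer convolution) are identical, and your observation that $d\ge 5$ is forced by the HLS admissibility $d/(d-2)<d/2$ matches the paper's implicit constraint.
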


\begin{proof}
By Lemma \ref{moser} and Hardy-Littlewood-Sobolev inequaltiy, we
obtain
\begin{align*}
&\Big\||\nabla|^\frac12\Big[\big(|x|^{-4}\ast(uv)\big)u\Big]\Big\|_{L_{t,x}^\frac{2(d+1)}{d+3}}\\
\leq&\big\||x|^{-4}\ast(uv)\big\|_{L_{t,x}^\frac{d+1}2}\big\||\nabla|^\frac12u\big\|_{L_{t,x}^\frac{2(d+1)}{d-1}}+
\big\||\nabla|^\frac12\big(|x|^{-4}\ast(uv)\big)\big\|_{L_t^2L_x^\frac{d}2}\|u\|_{L_t^{d+1}L_x^\frac{2d(d+1)}{d^2-d-4}}\\
\lesssim&\big\|uv\big\|_{L_t^\frac{d+1}2L_x^\frac{d(d+1)}{d^2-d-4}}\|u\|_{X(I)}+\big\||\nabla|^\frac12(uv)\big\|_{L_t^2L_x^\frac{d}{d-2}}\|u\|_{X(I)}\\
\lesssim&\|u\|_{X(I)}^2\|v\|_{X(I)}.
\end{align*}
Similarly, we can estimate another term.

\end{proof}

\subsection{Stability}
 Closely related to the continuous
dependence on the data, an essential tool for concentration
compactness
 arguments is the stability theory. More
precisely, given an approximate equation
\begin{equation}\label{near1}
\tilde{u}_{tt}-\Delta \tilde u=-f(\tilde{u})+e
\end{equation}
to \eqref{equ1}, with $e$ small in a suitable space and
$(\tilde{u}_0,\tilde{u}_1)$ is close to $(u_0,u_1)$ in energy space,
is it possible to show that solution $u$ to \eqref{equ1} stays very
close to $\tilde{u}$? Note that the question of continuous
dependence on the data corresponds to the case $e=0$.

The following  lemma for the nonlinear wave-Hartree equation is
analogs to the nonlinear Schr\"{o}dinger equation in \cite{CKSTT07}.
For convenience, we state the lemma and sketch its proof.

\begin{lemma}[Stability]\label{long}  Let $I$ be a
time interval, and let $\tilde{u}$ be a function on $I\times \R^d$
which is  a near-solution to \eqref{equ1}  in the sense that
\begin{equation}\label{near}
\tilde{u}_{tt}-\Delta \tilde u=-f(\tilde{u})+e
\end{equation}
for some function $e$. Assume that
\begin{equation}\label{eq2.20}\begin{aligned}
\|\tilde{u}\|_{S(I)}\leq M,\\ \|\tilde{u}\|_{L^\infty_t (I;\dot
H^1_x(\R^d))}+\|\partial_{t}\tilde{u}\|_{L^\infty_t (I; L^2_x(
\R^d))}\leq E
\end{aligned}
\end{equation}
for some constant $M,E>0$, where $S(I)$ is defined in
\eqref{scattersize}. Let $t_0\in I$, and let $(u(t_0),u_t(t_0))\in
\dot{H}^1\times L^2$ be close to $(\tilde{u}(t_0),\tilde{u_t}(t_0))$
in the sense that
\begin{equation}\label{eq2.21}
\|(u(t_0)-\tilde{u}(t_0),u_t(t_0)-\tilde{u}_t(t_0))\|_{\dot
H^1\times L^2}\leq\epsilon
\end{equation}
and assume also that the error term obeys
\begin{equation} \label{eq2.22}
\begin{split}
\big\||\nabla|^\frac12e\big\|_{L_{t,x}^\frac{2(d+1)}{d+3}(I\times\R^d)}\leq
\epsilon
\end{split}
\end{equation}
for some small $0<\epsilon<\epsilon_1=\epsilon_1( M, E)$. Then, we
conclude that there exists a solution $u:~I\times\R^d\to\R$ to
\eqref{equ1} with initial data $(u(t_0),u_t(t_0))$ at $t=t_0$, and
furthermore
\begin{equation}\label{eq2.23}
\begin{aligned}
\|u-\tilde{u}\|_{S^1(I)}\leq & C(M,E),\\
\|u\|_{S^1(I)}\leq & C(M,E),\\
\|u-\tilde{u}\|_{S(I)}\leq & C(M,E)\epsilon^c,
\end{aligned}
\end{equation}
where $c$ is a positive constant that depends on $d,~M$ and $E$, and
$S^1(I)$ is defined in \eqref{suoyimdy}.
\end{lemma}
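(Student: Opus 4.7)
The plan is to follow the standard two-step stability strategy familiar from the energy-critical NLS setting (cf.\ \cite{CKSTT07}): first establish a \emph{short-time} version assuming $\tilde u$ has small $X(I)$-norm on the whole of $I$, and then cover the general case by finitely many iterations on a partition of $I$ determined by $M$ and $E$. Setting $w=u-\tilde u$, one has
$$w_{tt}-\Delta w=-\bigl[f(\tilde u+w)-f(\tilde u)\bigr]-e,\qquad \bigl\|(w,w_t)(t_0)\bigr\|_{\dot H^1\times L^2}\leq\epsilon.$$
Expanding $f(\tilde u+w)-f(\tilde u)$ produces trilinear terms of the schematic form $\bigl(V*(\phi_1\phi_2)\bigr)\phi_3$ with $\phi_i\in\{\tilde u,w\}$ and at least one factor equal to $w$. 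Applying Lemma~\ref{nonle} to each term yields
$$\bigl\||\nabla|^{1/2}\bigl[f(\tilde u+w)-f(\tilde u)\bigr]\bigr\|_{L_{t,x}^{\frac{2(d+1)}{d+3}}(I\times\R^d)}\lesssim \bigl(\|\tilde u\|_{X(I)}^2+\|w\|_{X(I)}^2\bigr)\|w\|_{X(I)}.$$

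In Step 1 (short-time case) one assumes additionally that $\|\tilde u\|_{X(I)}\leq\delta_0$ for a small constant $\delta_0=\delta_0(E)$. First, applying Lemma~\ref{strichartzest} and Lemma~\ref{nonle} to the equation \eqref{near} for $\tilde u$ and running a bootstrap on $\|\tilde u\|_{X(I)}$ (using the smallness of $|\nabla|^{1/2}e$ in \eqref{eq2.22}), one converts the hypotheses into a quantitative bound $\|\tilde u\|_{S^1(I)}\lesssim_{E}1$. Next, applying Lemma~\ref{strichartzest} to the equation for $w$ gives
$$\|w\|_{S^1(I)}\lesssim\epsilon+\bigl(\delta_0^2+\|w\|_{X(I)}^2\bigr)\|w\|_{X(I)}.$$
Since $\|w\|_{X(I)}\lesssim\|w\|_{S^1(I)}$, a continuity argument in the length of the time interval closes this bootstrap and produces $\|w\|_{S^1(I)}\lesssim\epsilon$. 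The existence of $u$ on $I$ then follows from the same fixed-point estimate, now applied to the Duhamel formulation of the equation for $w$.

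In Step 2 (iteration), for general $M$, partition $I$ into $J=J(M,E)$ consecutive subintervals $I_1,\ldots,I_J$ on which $\|\tilde u\|_{X(I_j)}\leq\delta_0$. Such a partition exists because Lemma~\ref{strichartzest} applied to \eqref{near}, combined with Lemma~\ref{nonle}, the $S(I)$-bound $M$, and the energy bound $E$, furnishes an a priori bound on $\|\tilde u\|_{X(I)}$ depending only on $M$, $E$, and the smallness of $e$. Apply Step 1 inductively: if $\epsilon_j$ denotes the $\dot H^1\times L^2$-discrepancy of $(u,\dot u)$ and $(\tilde u,\dot{\tilde u})$ at the left endpoint of $I_j$, then Step 1 gives $\epsilon_{j+1}\lesssim C_1\epsilon_j$ together with $\|w\|_{S^1(I_j)}\lesssim C_1\epsilon_j$. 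Choosing $\epsilon_1=\epsilon_1(M,E)$ so small that $C_1^{J}\epsilon_1$ remains in the short-time regime, summing over $j$ gives the first two lines of \eqref{eq2.23}. The sharper $\epsilon^{c}$ estimate in the third line is obtained by interpolating the $S^1(I)$-bound (whose constant grows with $J$) against a weaker norm in which direct $\epsilon$-smallness survives every iteration step, yielding a power $c=c(d,M,E)\in(0,1)$.

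The main obstacle is the nonlocal cubic nonlinearity: unlike the pointwise power nonlinearity $|u|^{4/(d-2)}u$, the difference $f(u)-f(\tilde u)$ cannot be controlled pointwise, so every step must be routed through the convolution trilinear estimate of Lemma~\ref{nonle}, which itself relies on the product rule and Hardy--Littlewood--Sobolev. Distributing the half-derivative across the three factors so as to simultaneously gain $\epsilon$-smallness on the $w$-factor and absorb the remaining factors into $X(I)$-norms of $\tilde u$ or $w$ is the only delicate piece of bookkeeping; once it is set up correctly the rest of the argument is a direct transcription of the Schr\"odinger stability proof.
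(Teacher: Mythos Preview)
Your proposal is correct and follows essentially the same strategy as the paper: upgrade the $S(I)$ bound on $\tilde u$ to an $X(I)$ bound via Strichartz and bootstrap, partition $I$ into finitely many subintervals on which $\|\tilde u\|_{X(I_j)}\leq\delta$, and then iterate a short-time estimate for the difference $w=u-\tilde u$ using Lemma~\ref{nonle} to control the trilinear Hartree remainder. The paper organizes the iteration by tracking the free evolutions $\gamma_j(t)=V_0(t-t_j)(\gamma(t_j),\gamma_t(t_j))$ rather than the $\dot H^1\times L^2$ discrepancy at the endpoints, but this is equivalent bookkeeping; also, the paper's iteration already yields a bound linear in $\epsilon$ (so $c=1$ works and no interpolation step is needed), but your interpolation remark is harmless.
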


\begin{proof}
Since $\|\tilde{u}\|_{S(I)}\leq M$, we may subdivide $I$ into $C(M,
\epsilon_0)$ time intervals $I_j$ such that
$$\|\tilde{u}\|_{S(I_j)}\leq \epsilon_0\ll1, \quad \quad 1\le j\le C(M,\epsilon_0). $$
 By the Strichartz estimate and standard bootstrap argument, we
have
\begin{equation*}\label{}
\|\tilde{u}\|_{S^1(I_j)}\leq C(E), \qquad 1\le j\le
C(M,\varepsilon_0).
\end{equation*}
Summing up over all the intervals, we obtain that
\begin{equation}\label{eq2.24}
\|\tilde{u}\|_{S^1(I)}\leq C(E, M).
\end{equation}
In particular, we have
\begin{equation}\label{eq2.25}
\|\tilde{u}\|_{X(I)}\leq C(E, M),
\end{equation}
where $X(I)$ is defined in \eqref{xidy}. This implies that there
exists a partition of the right half of $I$ at $t_0$:
$$t_0<t_1<\cdots<t_N,~I_j=(t_j,t_{j+1}),~I\cap(t_0,\infty)=(t_0,t_N),$$
such that $N\leq C(L,\delta)$ and for any $j=0,1,\cdots,N-1,$ we
have
\begin{equation}\label{ome}
\|\tilde{u}\|_{X(I_j)}\leq\delta\ll1.
\end{equation}
The estimate on the left half of $I$ at $t_0$ is analogue, we omit
it.

Let
\begin{equation}
\gamma(t)=u(t)-\tilde{u}(t),
\end{equation}
and \begin{equation}{\gamma_j(t) \choose
\dot{\gamma}_j(t)}=V_0(t-t_j){\gamma(t_j) \choose
\gamma_t(t_j)},~0\leq j\leq N-1,
\end{equation} then $\gamma$ satisfies the following
difference equation
\begin{align*}
\begin{cases}
(\partial_{tt}-\Delta){\gamma}=&-(V\ast|\tilde{u}|^2)\gamma-2\big[V\ast(\gamma
\tilde{u})\big]\tilde{u}-2\big[V\ast(\gamma
\tilde{u})\big]\gamma\\&-(V\ast|\gamma|^2)\tilde{u}-(V\ast|\gamma|^2)\gamma-e\\
\qquad\qquad\quad\triangleq&eq(\gamma)\\
{\gamma}(t_j)={\gamma}_j(t_j),&\gamma_t(t_j)=\dot{\gamma}_j(t_j),
\end{cases}
\end{align*}
which implies that
\begin{align*}
{\gamma(t)\choose \gamma_t(t)}=&{\gamma_j(t)\choose\dot{\gamma}_j(t)}-\int_{t_j}^t V_0(t-s){0\choose eq(\gamma)(s)}ds,\\
{\gamma_{j+1}(t) \choose \dot{\gamma}_{j+1}(t)}=&{\gamma_j(t)
\choose \dot{\gamma}_j(t)}-\int_{t_j}^{t_{j+1}}V_0(t-s){0\choose
eq(\gamma)(s)}ds.
\end{align*}
It follows from Lemma \ref{strichartzest} and Lemma \ref{nonle} that
\begin{align}\nonumber
\|\gamma-\gamma_j\|_{X(I_j)}+\|\gamma_{j+1}-\gamma_j\|_{X(I)}\lesssim&\sum_{k=1}^3\|\gamma\|_{X(I_j)}^k\|\tilde{u}\|_{X(I_j)}^{3-k}
+\big\||\nabla|^\frac12e\big\|_{L_{t,x}^\frac{2(d+1)}{d+3}(I_j\times\R^d)}\\\label{equ3}
\lesssim&\sum_{k=1}^3\|\gamma\|_{X(I_j)}^k\|\tilde{u}\|_{X(I_j)}^{3-k}+\epsilon.
\end{align}
Therefore, assuming that
\begin{equation}\label{laodong}
\|\gamma\|_{X(I_j)}\leq\delta\ll1,~\forall~j=0,1,\cdots,N-1,
\end{equation}
then by \eqref{ome} and \eqref{equ3}, we have
\begin{equation}
\|\gamma\|_{X(I_j)}+\|\gamma_{j+1}\|_{X(t_{j+1},t_N)}\leq
C\|\gamma_j\|_{X(t_j,t_N)}+\epsilon,
\end{equation}
for some absolute constant $C>0$. By \eqref{eq2.22} and iteration on
$j$, we obtain
\begin{equation}
\|\gamma\|_{X(I)}\leq (2C)^N\epsilon\leq\frac{\delta}2,
\end{equation}
if we choose $\epsilon_1$ sufficiently small. Hence the assumption
\eqref{laodong} is justified by continuity in $t$ and induction on
$j$. Then repeating the estimate \eqref{equ3} once again, we can get
the other Strichartz estimates on $u$.
\end{proof}

\vskip 0.2in

Using the above lemma  as well as its proof, one easily derives the
following local theory for \eqref{equ1}.

\begin{theorem}[Local well-posedness]\label{local}
Assume that $d\geq5$. Then, given  $(u_0,u_1)\in \dot
H^1(\R^d)\times L^2(\R^d)$ and $t_0\in\R$, there exists a unique
maximal-lifespan solution $u: I\times\R^d\to\R$ to \eqref{equ1} with
initial data $\big(u(t_0),u_t(t_0)\big)=\big(u_0,u_1\big)$. This
solution also has the following properties:
\begin{enumerate}
\item $($Local existence$)$ $I$ is an open neighborhood of $t_0$.
\item $($Blowup criterion$)$ If\ $\sup (I)$ is finite, then $u$ blows up
forward in time. If $\inf (I)$ is finite, then $u$ blows up backward
in time.
\item $($Scattering$)$ If $\sup (I)=+\infty$ and $u$ does not blow up
forward in time, then $u$ scatters forward in time in the sense
\eqref{1.2}. Conversely, given $(v_+,\dot{v}_+)\in \dot
H^1(\R^d)\times L^2(\R^d)$ there is a unique solution to
\eqref{equ1} in a neighborhood of infinity so that \eqref{1.2}
holds.
\item There
exists $\delta=\delta(d,\|(u_0,u_1)\|_{\dot{H}^1\times L^2})$ such
that if
$$\big\|\dot{K}(t-t_0)u_0 +
K(t-t_0)u_1\big\|_{S(I)}<\delta,$$ then, there exists a unique
solution $u:~I\times\mathbb{R}^d\to\R$ to \eqref{equ1}, with
$(u,\dot{u})\in C(I;\dot{H}^1\times L^2),$ and
\begin{equation*}
\|u\|_{S(I)}\leq 2\delta,~ \|u\|_{S^1(I)}<+\infty.
\end{equation*}
\item $($Small data global existence$)$ If
$\big\|(u_0,u_1)\big\|_{\dot H^1\times L^2}$ is sufficiently small
$($depending on $d)$, then $u$ is a global solution which does not
blow up either forward or backward in time. Indeed, in this case
$$S_\R(u)\lesssim\big\|(u_0,u_1)\big\|_{\dot H^1\times L^2}.$$
\end{enumerate}
\end{theorem}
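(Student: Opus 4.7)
The heart of the argument is (4), a contraction mapping on the Duhamel formula; the remaining assertions follow by standard packaging.

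For (4), write $E := \|(u_0,u_1)\|_{\dot H^1 \times L^2}$ and set $u_L(t) := \dot K(t-t_0)u_0 + K(t-t_0)u_1$. Define
\[
\Phi(u)(t) := u_L(t) - \int_{t_0}^t K(t-s) f(u(s))\, ds.
\]
Lemma \ref{strichartzest} applied with $F_1 = f(u)$, $F_2 = 0$, combined with the trilinear bound of Lemma \ref{nonle}, gives
\[
\|\Phi(u)\|_{S^1(I)} \lesssim E + \|u\|_{X(I)}^3,
\]
and an analogous difference estimate for $\Phi(u) - \Phi(v)$. Strichartz also yields the global bound $\|u_L\|_{X(I)} \lesssim E$. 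Exploiting the hypothesis $\|u_L\|_{S(I)} \leq \delta$, one subdivides $I$ into finitely many subintervals $I_1,\ldots,I_N$ on each of which $\|u_L\|_{X(I_k)} \leq \eta$ for any prescribed $\eta>0$ -- this is the same subdivision device used inside the proof of Lemma \ref{long}, legitimate because $X(I)$ uses only finite time exponents. For $\eta = \eta(d,E)$ small enough, $\Phi$ is a contraction on the ball $\{u : \|u - u_L\|_{X(I_k)} \leq \eta\}$, producing a solution on each $I_k$; gluing across the $N$ pieces yields a solution on all of $I$ with $\|u\|_{S^1(I)} < \infty$. Reinserting into $\Phi$ and using $\|u_L\|_{S(I)} \leq \delta$ gives $\|u\|_{S(I)} \leq 2\delta$ once $\delta = \delta(d,E)$ is small enough.

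For (1), Strichartz yields $\|u_L\|_{S(\R)} < \infty$ for any energy data, so by dominated convergence $\|u_L\|_{S(J)} \to 0$ as $|J| \to 0$ with $t_0 \in J$; applying (4) on such a $J$ produces a local solution, and the maximal lifespan $I$ is obtained as the union of local solutions. For (2), if $\sup I < \infty$ and $S_{[t_0,\sup I)}(u)<\infty$, then applying the stability Lemma \ref{long} with $\tilde u = u$, $e = 0$ upgrades this to $\|u\|_{S^1([t_0,\sup I))}<\infty$; the Duhamel formula then shows $(u(t),u_t(t))$ converges in $\dot H^1\times L^2$ as $t \uparrow \sup I$, and (1) applied at the limit extends $u$ past $\sup I$, contradicting maximality. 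For (3), $S_{[t_0,\infty)}(u)<\infty$ implies $V_0(-t)(u(t),u_t(t))$ is Cauchy in $\dot H^1\times L^2$ by the Duhamel formula and Strichartz, giving the scattering state $(v_+(0),\dot v_+(0))$; the existence of the wave operator reduces to applying (4) on $[T,\infty)$ with $T$ large enough that the linear evolution of $v_+$ has $S([T,\infty))$-norm below $\delta$, and then extending backward via (1). Finally (5) is immediate: small $E$ gives small $\|u_L\|_{S(\R)}$ by Strichartz, so (4) applies with $I = \R$.

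The only subtle point is passing from smallness of $\|u_L\|_{S(I)}$ to the smallness of $\|u_L\|_{X(I)}$ needed to invoke the trilinear estimate; I avoid direct interpolation by using the subdivision procedure above, which is the same device already used inside the proof of Lemma \ref{long}.
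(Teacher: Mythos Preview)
Your overall architecture matches the paper exactly: the paper gives no proof at all beyond the sentence ``Using the above lemma [Lemma~\ref{long}] as well as its proof, one easily derives the following local theory,'' so your packaging of (1)--(3) and (5) from (4) plus Strichartz/Duhamel is precisely what is intended.

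There is, however, a genuine gap in your treatment of (4). The subdivision you describe partitions $I$ into $N$ pieces with $\|u_L\|_{X(I_k)}\le\eta$, but $N$ is controlled only by $\|u_L\|_{X(I)}\lesssim E$ and $\eta$, not by the hypothesis $\|u_L\|_{S(I)}\le\delta$; so $N\sim (E/\eta)^{q}$ with $q$ the time exponent in $X$. When you glue, the linear evolution from the new data at the end of $I_k$ is $u_L$ plus a Duhamel contribution of size $\lesssim\eta^3$, and after $N$ steps the accumulated drift is $\sim N\eta^3$. The bootstrap $\|u-u_L\|_{X(I_k)}\le\eta$ then forces $N\eta^2\lesssim 1$, which for $N\sim(E/\eta)^q$ with $q>2$ (as here) fails for small $\eta$ unless $E$ itself is small. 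In other words, your scheme as written proves (5), not (4). This is \emph{not} the same device as in Lemma~\ref{long}: there the global error $\epsilon$ is assumed small relative to $(2C)^{-N(M,E)}$, whereas for $\tilde u=u_L$ the error $e=f(u_L)$ has size $\sim E^3$, not small.

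The fix is to exploit the specific structure of Lemma~\ref{nonle}: inspection of its proof shows
\[
\big\||\nabla|^{1/2}f(u)\big\|_{L_{t,x}^{\frac{2(d+1)}{d+3}}}\lesssim \|u\|_{X_1}^2\,\|u\|_{X_2},
\qquad X_1=L_t^{d+1}L_x^{\frac{2d(d+1)}{d^2-d-4}},\quad X_2=L_{t,x}^{\frac{2(d+1)}{d-1}}\dot W^{\frac12},
\]
and $X_1$ genuinely interpolates between $S(I)$ and $L_t^\infty L_x^{\frac{2d}{d-2}}\subset S^1(I)$, giving $\|u\|_{X_1}\lesssim\|u\|_{S}^{\frac{2}{d-2}}\|u\|_{S^1}^{\frac{d-4}{d-2}}$. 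Running the contraction directly on $\{\|u\|_{S}\le 2\delta,\ \|u\|_{S^1}\le 2CE\}$ (no subdivision) then closes, because the nonlinearity contributes $\lesssim\delta^{\frac{4}{d-2}}E^{3-\frac{4}{d-2}}$, and the exponent $\tfrac{4}{d-2}>0$ on $\delta$ (with two powers of the interpolable norm $X_1$) is what makes both the $S$-- and $S^1$--bounds self-improve for $\delta=\delta(d,E)$ small.
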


\subsection{The Extended Causality}
In this subsection, we show a kind of the finite speed of
propagation named as causality to control the spatial center
function $x(t)$, which extends the result in Menzala-Strauss
\cite{MeS82}. As stated in the introduction, for the wave-Hartree
equation, we can not show the monotone local energy estimate on the
light cone. And as a substitute, Menzala-Strauss \cite{MeS82} show a
kind of the finite speed of propagation named as causality
 for the case $V\in
L^{\frac{d}{3}}+L^\infty$.

In fact, the causality can be improved,  this relies on two
important observations: one point is that the linear operators
$K(t)$ and $\dot K(t)$ still enjoy the finite speed of propagation,
the other point is that the Hartree term acted by the cut-off
function can be viewed as the linear feedback of the cutoff solution
in the cut-off Duhamel formulae (see \eqref{cut}) due to the
short-time Strichartz-norm boundness of the solution. The former
allows the cutoff function to go cross the linear operators $K(t)$
and $\dot K(t)$ and act directly on data and nonlinearity, while the
latter suggests us to iterate the solution just as the Gronwall
inequality not as the bootstrap argument. Based on the above
discussions, we can extend the exponent range of the causality to
 energy critical case $V\in
L^{\frac{d}4-}+L^\infty.$
\begin{lemma}[Extended Causality]\label{extend}
Assume that the data $(u_0,u_1)\in
 \dot{H}^1\times L^2$ have the compact support, i.e
\begin{align}{\rm{supp}}~ u_0, ~{\rm{supp}}~ u_1\subset \{x\in\Bbb R^d~:~|x|\leq R\}
\end{align}
for some constant $R>0$ and  $\big(u(t),\dot{u}(t)\big)\in
C([0,T_+(u_0,u_1)),\dot{H}^1\times L^2)$ is the finite energy
solution of the equation $(\ref{equ1})$ with initial data
$(u_0,u_1)$. Then it holds that
\begin{align}u(t,x)=0, ~~a.e.~~x\in\{x\in\Bbb R^d~:~|x|> R+t\},~\forall~t\in [0,T_+(u_0,u_1)).\end{align}

\end{lemma}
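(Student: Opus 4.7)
The plan is to combine the finite speed of propagation of the free wave propagators $K(t)$ and $\dot K(t)$ with a Gronwall-type iteration in a suitable Strichartz norm. The key structural observation is that although $V\ast|u|^2$ is nowhere compactly supported, the Hartree term $f(u)=(V\ast|u|^2)u$ is a pointwise product with $u$, so $f(u)$ vanishes on any set where $u$ does. This is what lets the cutoff function pass through the nonlinearity cleanly.

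Fix $t\in[0,T_+)$, set $\chi_s(x)=\mathbf{1}_{\{|x|>R+s\}}$, and define $w(s,x):=\chi_s(x)u(s,x)$. From the Duhamel formula \eqref{duhamel}, multiplication by $\chi_t$ kills the free-evolution piece since $\mathrm{supp}\,u_0,\mathrm{supp}\,u_1\subset\{|y|\le R\}$ and $K(t),\dot K(t)$ have distributional finite speed of propagation. For the nonlinear contribution, the same property applied to $K(t-s)$ gives
$$\chi_t K(t-s)f(u(s))=\chi_t K(t-s)\bigl[\chi_s f(u(s))\bigr],$$
because any piece of $f(u(s))$ supported in $\{|y|\le R+s\}$ evolves into $\{|x|\le R+t\}$ and is then annihilated by $\chi_t$. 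Using the pointwise identity $\chi_s f(u(s))=(V\ast|u(s)|^2)w(s)$, we arrive at the closed identity
$$w(t)=-\chi_t\int_0^t K(t-s)\bigl[(V\ast|u(s)|^2)w(s)\bigr]\,ds.$$
Now apply the inhomogeneous Strichartz inequality of Lemma~\ref{strichartzest} together with the product and Hardy-Littlewood-Sobolev machinery of Lemma~\ref{nonle}, working in the components of the norm $X(I)$ from \eqref{xidy} that do not put derivatives on $w$ (so that the pointwise multiplier $\chi_t$ is harmless). This yields
$$\|w\|_{X([0,T])}\;\lesssim\;\|u\|_{X([0,T])}^2\,\|w\|_{X([0,T])}.$$
By the local theory (Theorem~\ref{local}), $\|u\|_{X(J)}$ is finite on every compact $J\subset[0,T_+)$ and can be made arbitrarily small by shrinking $|J|$. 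Partition $[0,T]$ into finitely many subintervals on which the implicit constant times $\|u\|_X^2$ is at most $1/2$; on the first piece the estimate forces $\|w\|_X=0$, and iterating forward with $w$ vanishing at each left endpoint propagates the conclusion across $[0,T]$. Since $T\in[0,T_+)$ was arbitrary, the lemma follows.

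The main obstacle is the selection of a Strichartz setup that simultaneously (a) accommodates the sharp cutoff $\chi_t$ without derivative loss, (b) closes through the inhomogeneous wave Strichartz inequality of Lemma~\ref{strichartzest}, and (c) reduces the apparently quasilinear product $(V\ast|u|^2)w$ to a genuinely linear estimate on $w$ with coefficient $u$ bounded a priori by its locally finite Strichartz norm. In other words, the Hartree term must be treated as a linear feedback on the cutoff solution rather than as a nonlinearity requiring a bootstrap, which is precisely the ``linear feedback'' viewpoint hinted at in the introduction. This is what pushes the causality threshold from the $V\in L^{d/3}+L^\infty$ range of Menzala-Strauss down to the energy-critical scaling $V\in L^{d/4-}+L^\infty$.
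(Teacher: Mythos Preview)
Your proposal is correct and follows the same strategy as the paper's proof: pass the cutoff through $K(t),\dot K(t)$ by finite speed of propagation, use the pointwise identity $\chi_s f(u(s))=(V\ast|u|^2)\,\chi_s u(s)$, and run a Gronwall-type iteration in a derivative-free Strichartz norm over a partition of $[0,T]$ into subintervals on which $\|u\|$ is small. The only difference is bookkeeping: rather than the $X(I)$ norm and Lemma~\ref{nonle} (which, as stated, places $|\nabla|^{1/2}$ on every factor and so does not directly give your displayed inequality), the paper works with the single $\mu=0$ admissible pair $(q,r)=\big(3,\tfrac{6d}{3d-8}\big)$ and puts the forcing in $L^1_tL^2_x$, obtaining the symmetric closed estimate
\[
\big\|\chi_t u\big\|_{L^3_tL^{6d/(3d-8)}_x}\;\lesssim\;\|u\|_{L^3_tL^{6d/(3d-8)}_x}^{2}\,\big\|\chi_t u\big\|_{L^3_tL^{6d/(3d-8)}_x}
\]
straight from H\"older and Hardy--Littlewood--Sobolev, so the sharp cutoff never meets a fractional derivative.
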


\begin{proof}
Let
\begin{align*}
\chi_t(x)=\begin{cases}1,\quad |x|>R+t,\\
0,\quad |x|\leq R+t.
\end{cases}
\end{align*}
From Duhamel's formula \eqref{inte1} and the finite speed of
propagation for the linear operators $K(t)$ and $\dot K(t)$, one has
\begin{equation}\label{cut}
\begin{split}
\chi_t(x)u(t)&=\chi_t(x)\dot{K}(t)u_0 +\chi_t(x)
K(t)u_1-\chi_t(x)\int^{t}_{0}K(t-s)f(u(s))ds\\
&=-\chi_t(x)\int^{t}_{0}K(t-s)(|x|^{-4}*|u(s)|^2)\chi_s(x)u(s)ds.
\end{split}
\end{equation}
For compact $[0,T]\subset \subset[0,T_+(u_0,u_1))$, by the
Strichartz estimate, we have
\begin{align*}
\big\|\chi_t(x)u(t)\big\|_{L^3_t([0,T],L_x^{\frac{6d}{3d-8}})}&\lesssim
\big\|(|x|^{-4}*|u|^2)\chi_t(x)u\big\|_{L^1_t([0,T],L^2_x)}\\
&\lesssim
\|u\|_{L^3_t([0,T],L_x^{\frac{6d}{3d-8}})}^2\big\|\chi_t(x)u\big\|_{L^3_t([0,T],L_x^{\frac{6d}{3d-8}})}.
\end{align*}
For $[0,T]\subset \subset[0,T_+(u_0,u_1))$, we can divide the
interval $[0,T]$ into $[0,T]=\cup_{j=1}^J I_j,$
 such that
$$\|u\|_{L^3_t(I_j,L_x^{\frac{6d}{3d-8}})}\leq \eta,$$
where $\eta$ is a small positive constant. And so
$$\big\|\chi_t(x)u(t)\big\|_{L^3_t(I_j,L_x^{\frac{6d}{3d-8}})}\leq
C\eta\big\|\chi_t(x)u(t)\big\|_{L^3_t(I_j,L_x^{\frac{6d}{3d-8}})},$$
thus
$$\big\|\chi_t(x)u(t)\big\|_{L^3_t(I_j,L_x^{\frac{6d}{3d-8}})}=0.$$
Summing up over all the intervals, we obtain that
\begin{equation}\label{ca1}
\big\|\chi_t(x)u(t)\big\|_{L^3_t([0,T],L_x^{\frac{6d}{3d-8}})}=0.\end{equation}
On the other hand, from the Strichartz estimates, we get
$$\big\|\chi_t(x)u(t)\big\|_{L_t^\infty([0,T],\dot H^1)}\lesssim
\|u\|_{L^3_t([0,T],L_x^{\frac{6d}{3d-8}})}^2\big\|\chi_t(x)u\big\|_{L^3_t([0,T],L_x^{\frac{6d}{3d-8}})}=0.$$
This together with \eqref{ca1} yields that
$$\chi_t(x)u(t)\equiv 0,\ a.e.\ x\in\mathbb{R}^d.$$
Therefore
$$u(t)\equiv 0,\ a.e.\ x\in\{x:|x|>
R+|t|\}.$$
\end{proof}

\section{Morawetz-type Estimate}
In this section, our task is to establish a useful Morawetz estimate
 by choosing a suitable multiplier, which plays an important role in excluding the almost periodic
solutions to \eqref{equ1}.  Noting that the nonlinearity of
\eqref{equ1} is a convolution term, we need use the quantity
$|u_\theta|^2/r+|u|^2/r^3$ to estimate $|u|^{2^*}$ inspired by
Nakanishi \cite{Na99e}. Compared with the local nonlinearity, we
explore a certain symmetry in nonlinearity to get a positive
integral quantity (this helps us to weak the requirement that the
integrand is positive) to deal with the nonlocal nonlinearity. The
embedding theorem for polar coordinates given in \cite{Na99e} is a
bridge connecting the whole space $\R^d$ with spherical surface
$\mathbb{S}^{d-1}$ in the proof.

\begin{proposition}[Morawetz estimate]\label{Morawetz}
Let u be a solution to \eqref{equ1} on a spacetime slab $I\times
\mathbb{R}^d$. then we have
\begin{eqnarray}\label{morawetzes}
\int_I\int_{\R^d}\frac{~~|u|^{2^*}}{|x|} dxdt\leq C(E),
\end{eqnarray}
where $2^\ast=\frac{2d}{d-2}$ and  $E$ is the energy $E(u_0,u_1)$.
\end{proposition}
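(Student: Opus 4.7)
The plan is to probe \eqref{equ1} with the radial Morawetz multiplier $Mu := \partial_r u + \tfrac{d-1}{2|x|}u$ (with $\partial_r = \tfrac{x}{|x|}\cdot\nabla$), extract spacetime bounds on the weighted tangential energy $|\nabla_\theta u|^2/|x|$ and on the weighted mass $|u|^2/|x|^3$, and then upgrade these to the stated $|u|^{2^\ast}/|x|$ bound via a polar Sobolev embedding. Writing the Morawetz action as $M(t) := \int_{\R^d} u_t\cdot Mu\,dx$ and using the cancellation $\int u_t\cdot Mu_t\,dx = 0$ together with the identity $\Delta^2|x| = -(d-1)(d-3)|x|^{-3}$ valid for $d\geq 5$, a direct calculation produces
\begin{equation*}
-\frac{d}{dt}M(t) = \int_{\R^d}\frac{|\nabla_\theta u|^2}{|x|}\,dx + \frac{(d-1)(d-3)}{4}\int_{\R^d}\frac{|u|^2}{|x|^3}\,dx + \int_{\R^d} f(u)\cdot Mu\,dx,
\end{equation*}
where $|\nabla_\theta u|^2 := |\nabla u|^2 - |\partial_r u|^2$ is the tangential gradient; the first two terms on the right are manifestly non-negative.

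The delicate step is to show that the Hartree contribution $\int f(u)\cdot Mu\,dx$ is also non-negative. With $F(x) := (|x|^{-4}\ast|u|^2)(x)$, integrating by parts in the radial direction cancels the $\tfrac{d-1}{2|x|}u$ portion of $Mu$ and leaves $\int f(u)Mu\,dx = -\tfrac{1}{2}\int \partial_r F\cdot|u|^2\,dx$. Writing this as a double integral and symmetrising in $x\leftrightarrow y$ turns it into
\begin{equation*}
\int_{\R^d} f(u)\cdot Mu\,dx = \iint_{\R^{d}\times\R^d}\frac{(|x|+|y|)\bigl(1 - \tfrac{x\cdot y}{|x||y|}\bigr)}{|x-y|^{6}}|u(x)|^2|u(y)|^2\,dx\,dy,
\end{equation*}
which is non-negative because of the identity $(\partial_r^x + \partial_r^y)|x-y|^{-4} = -4(|x|+|y|)(1-\cos\theta_{xy})/|x-y|^{6}\leq 0$. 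Combining with $|M(t)|\leq \|u_t\|_{L^2}\|Mu\|_{L^2}\lesssim E$ (Cauchy--Schwarz and Hardy's inequality for $\|u/|x|\|_{L^2}\lesssim\|\nabla u\|_{L^2}$), integrating over $I$ yields the spacetime bounds
\begin{equation*}
\int_I\int_{\R^d}\frac{|\nabla_\theta u|^2}{|x|}\,dxdt + \int_I\int_{\R^d}\frac{|u|^2}{|x|^3}\,dxdt \lesssim E.
\end{equation*}

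To deliver the claimed $|u|^{2^\ast}/|x|$ bound I pass to polar coordinates and invoke the polar Sobolev embedding of \cite{Na99e}. Setting $v_r(\omega) := u(r\omega)$, the Sobolev--Gagliardo--Nirenberg inequality on the $(d-1)$-sphere (applicable since $2^\ast < 2(d-1)/(d-3)$ for $d\geq 3$) gives
\begin{equation*}
\int_{S^{d-1}}|v_r|^{2^\ast}\,d\omega \lesssim \|v_r\|_{L^2(S^{d-1})}^{2/(d-2)}\Bigl(\|v_r\|_{L^2(S^{d-1})}^2 + \|\nabla_\omega v_r\|_{L^2(S^{d-1})}^2\Bigr)^{(d-1)/(d-2)}.
\end{equation*}
Multiplying by $r^{d-2}$, integrating in $r$ and $t$, and performing a H\"older split that combines the Morawetz bounds above with the pointwise-in-$t$ Hardy/energy inequalities $\int_{\R^d}|u|^2/|x|^2\,dx\lesssim E$ and $\|u\|_{L^\infty_t\dot H^1}\lesssim E^{1/2}$ closes the estimate to produce $\int_I\int_{\R^d}|u|^{2^\ast}/|x|\,dxdt\lesssim C(E)$.

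The principal obstacle is the symmetrisation step: one must spot that the radial derivatives of $|x-y|^{-4}$ assemble into a manifestly signed kernel, which is the decisive reason the exponent $4$ in the potential is tractable. The polar Sobolev embedding of the final paragraph is the substitute for the direct Morawetz computation that would produce $|u|^{2^\ast}/|x|$ from a local critical nonlinearity $|u|^{2^\ast-2}u$; once the Morawetz bounds are in hand, verifying the requisite matching of weights in $r$ and $t$ is delicate but essentially algebraic.
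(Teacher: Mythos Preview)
Your multiplier computation and the symmetrisation of the Hartree term are exactly what the paper does; the identities match (up to harmless constants), and the resulting bound
\[
\int_I\int_{\R^d}\Big(\frac{|\nabla_\theta u|^2}{|x|}+\frac{|u|^2}{|x|^3}\Big)\,dx\,dt\lesssim E
\]
is correct.

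The genuine gap is in your final upgrade. The spherical Gagliardo--Nirenberg inequality you wrote interpolates $L^{2^\ast}(S^{d-1})$ between $H^1(S^{d-1})$ and $L^2(S^{d-1})$, producing
\[
\|v_r\|_{L^{2^\ast}(S)}^{2^\ast}\lesssim \|v_r\|_{L^2(S)}^{2/(d-2)}\,\|v_r\|_{H^1(S)}^{2(d-1)/(d-2)}.
\]
After multiplying by $r^{d-2}$ and integrating, the $H^1$-factor appears to the power $2(d-1)/(d-2)>2$. The Morawetz estimate controls $\int_I\int r^{d-4}\|v_r\|_{H^1(S)}^2\,dr\,dt$, i.e.\ exactly power $2$; the excess $\|v_r\|_{H^1(S)}^{2/(d-2)}$ must therefore be taken in $L^\infty_r$ (since all your remaining inputs, Hardy and the energy, are $L^\infty_t$, the Morawetz factor must absorb the full $L^1_t$). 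Concretely you would need
\[
\sup_{r}\, r^{(d-2)/2}\|v_r\|_{H^1(S^{d-1})}\lesssim \|\nabla u\|_{L^2(\R^d)},
\]
and while the $L^2(S)$ part of this is true (by a one-dimensional $W^{1,1}$ argument), the angular part $\sup_r r^{(d-2)/2}\|\nabla_\omega v_r\|_{L^2(S)}$ is \emph{not} controlled by $\|u\|_{\dot H^1}$: differentiating in $r$ brings in $\nabla_\omega\partial_r u$, i.e.\ second derivatives of $u$. No H\"older split in $(r,t)$ using only Hardy, the energy bound, and Morawetz can absorb $\|\nabla_\omega v_r\|_{L^2(S)}$ to a power strictly larger than $2$.

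The paper's remedy is to change the lower endpoint of the spherical interpolation from $L^2$ to $L^\beta$ with $\beta=\tfrac{2(d-1)}{d-2}$. Then the interpolation (complex interpolation with parameter $2/d$) gives
\[
\|v_r\|_{L^{2^\ast}(S)}^{2^\ast}\lesssim \|v_r\|_{H^1(S)}^{\,2}\,\|v_r\|_{L^\beta(S)}^{4/(d-2)},
\]
so the $H^1$-power is exactly $2$, matching Morawetz. The remaining factor $r^2\|v_r\|_{L^\beta(S)}^{4/(d-2)}=[r^{(d-2)/2}\|v_r\|_{L^\beta(S)}]^{4/(d-2)}$ is handled in $L^\infty_r$ by Nakanishi's polar Sobolev embedding
\[
\dot H^1(\R^d)\hookrightarrow L^\beta_\theta L^{\infty,\nu}_r\hookrightarrow L^{\infty,\nu}_r L^\beta_\theta,\qquad \nu=\tfrac{d-2}{2},
\]
which gives $\sup_r r^{(d-2)/2}\|v_r\|_{L^\beta(S)}\lesssim \|\nabla u\|_{L^2}$. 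This is precisely the content of the lemma from \cite{Na99e} you cite but do not actually use; once you invoke it with the correct exponent $\beta$, the estimate closes as
\[
\int_{\R^d}\frac{|u|^{2^\ast}}{|x|}\,dx\lesssim \|\nabla u\|_{L^2}^{4/(d-2)}\int_{\R^d}\Big(\frac{|u|^2}{|x|^3}+\frac{|\nabla_\theta u|^2}{|x|}\Big)dx,
\]
and integrating in $t$ finishes the proof.
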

{\bf Proof.}  
Let $\psi=u_r+\frac{(d-1)u}{2|x|}$, then we have
\begin{eqnarray*}
&\text{Re}\big\{\big((\pa_{tt}-\Delta)u+ (V\ast|u|^2)u\big)\bar \psi\big\}\\
=&\text{Re} \partial_t(\dot u \bar \psi)+ \text{Re}\nabla\cdot
\big\{-(\nabla u\bar
\psi)+\theta\ell(u)+\frac12|u|^2\nabla(\frac{(d-1)}{2|x|})\big\}
\\&+\frac{|u_\theta|^2}r+\frac{(d-1)(d-3)|u|^2}{4r^3}-\frac12\theta\cdot\nabla(V\ast|u|^2)|u|^2,
\end{eqnarray*}
where
\begin{equation*}
\begin{cases}
\ell (u)=\frac{1}{2}\big(-|\dot{u}|^2+|\nabla
u|^2+(V*|u|^2)|u|^2\big),\\
r=|x|,\quad \quad \quad \theta=\frac x {|x|},\\
u_r=\theta\cdot\nabla u,\quad u_{\theta}=\nabla u-\theta u_r.
\end{cases}
\end{equation*}

Integrating the above equality with respect to $(t, x)$ over
$$W=\Big\{(t,x)\Big|\;  0<a\leq t\leq b,\; x\in\R^d\Big\},$$
 we obtain that
\begin{equation}\label{eq3.1}
\int_W\Big[\frac{|u_\theta|^2}r+\frac{(d-1)(d-3)|u|^2}{4r^3}-\frac12|u|^2\frac
x{|x|}\cdot \nabla (V\ast |u|^2)\Big]dx dt \leq C
\bigg|\int_{\R^d}\dot u\bar \psi dx\big |_{t=a}^{t=b}\bigg|.
\end{equation}

Since
\begin{equation*}
 -\Big(\frac{x}{|x|}-\frac{y}{|y|}\Big)\nabla
V(x-y)=4\frac{|x||y|-x\cdot y}{|x-y|^6}\Big( \frac{1}{|x|}+
\frac{1}{|y|}\Big)\geq 0,
\end{equation*}
we have
\begin{equation*}
-\int_{a}^{b}\int_{\R^d\times\R^d}
\Big(\frac{x}{|x|}-\frac{y}{|y|}\Big)\nabla
V(x-y)|u(x)|^{2}|u(y)|^{2}dydxdt \geq 0.
\end{equation*}
This  implies that
\begin{equation}\label{eq3.2}
-\frac12\int_W|u|^2\frac x{|x|}\cdot \nabla (V\ast |u|^2)dx dt \geq
0.
\end{equation}
Substituting \eqref{eq3.2} into \eqref{eq3.1} and making use of the
Hardy inequality, we obtain that
\begin{equation}\label{eq3.3}
\begin{aligned}
\int_W\Big[\frac{|u_\theta|^2}r+\frac{(d-1)(d-3)|u|^2}{4r^3}\Big]dx
dt &\leq C \bigg|\int_{\R^d}\dot u\bar \psi dx\big
|_{t=a}^{t=b}\bigg|\\&\leq C\big(\|\dot{u}\|^2_2+\|\nabla
u\|_2^2\big)\\&\leq CE.
\end{aligned}
\end{equation}

On the other hand, we have
\begin{equation}\label{eq3.4}
\int_{\R^d}\frac{|u|^{2^*}}rdx=\int_0^\infty
r^{-1}\int_{\mathbb{S}^{d-1}}|u(r\theta)|^{2^*}d\theta r^{d-1}d
r=\int_0^\infty
r^{d-2}\|u(r\cdot)\|_{L^{2^*}_\theta(\mathbb{S}^{d-1})}^{2^*} d r.
\end{equation}
Note that the following interpolation and the Sobolev embedding
\begin{equation}\label{eq3.5}
\big[H^1(\mathbb{S}^{d-1}),
L^\beta(\mathbb{S}^{d-1})\big]_{\frac2d}=H^{\sigma}_q(\mathbb{S}^{d-1})\hookrightarrow
L^{2^*}(\mathbb{S}^{d-1}),
\end{equation}
where
\begin{equation*}
\begin{cases}
\beta=\frac{2(d-1)}{d-2},\quad \sigma=\frac{d-2}d,\\
\frac 1
q=\frac{\sigma}{d-1}+\frac1{2^*}=(1-\frac2d)\frac12+\frac2{d\beta},
\end{cases}
\end{equation*}
it follows that
\begin{equation}\label{eq3.6}
\int_{\R^d}\frac{|u|^{2^*}}rdx\leq\int_0^\infty
r^{d-2}\|u(r\cdot)\|_{H^1_\theta(\mathbb{S}^{d-1})}^{2}\|u(r\cdot)\|_{L^\beta_\theta(\mathbb{S}^{d-1})}^{\frac4{d-2}}
d r.
\end{equation}

Next we need  the following Sobolev embedding for the polar coordinates
given by Proposition 3.7  in \cite{Na99e}.
\begin{lemma}
Let $1\leq p <d.$ Then
\begin{equation}\label{eq3.7}
\dot W^{1,p}\hookrightarrow
L^\beta_{\theta}L^{\infty,\nu}_r\hookrightarrow L^{\infty,\nu}_r
L^\beta_{\theta},
\end{equation}
where $\beta=\frac{(d-1)p}{d-p}$, $\nu=\frac{d-p}p$ and
$L_r^{\infty,\nu}=\big\{u: \|u\|_{L_r^{\infty,\nu}}=\|r^\nu
u(r)\|_{L_r^\infty}<\infty$\big\}.
\end{lemma}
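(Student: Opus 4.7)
The plan is to establish both embeddings separately, exploiting the key identity $\nu\beta = \frac{d-p}{p}\cdot\frac{(d-1)p}{d-p} = d-1$, which is what makes the sharp exponents match throughout.

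The second embedding $L^\beta_\theta L^{\infty,\nu}_r \hookrightarrow L^{\infty,\nu}_r L^\beta_\theta$ is immediate from Minkowski's inequality, since pulling the supremum outside the $\theta$-integral only increases the quantity:
\begin{equation*}
\|u\|_{L^{\infty,\nu}_r L^\beta_\theta} = \sup_{r>0}\Big(\int_{S^{d-1}} r^{\nu\beta}|u|^\beta\,d\theta\Big)^{1/\beta} \leq \Big(\int_{S^{d-1}} \sup_{r>0} r^{\nu\beta}|u|^\beta\,d\theta\Big)^{1/\beta} = \|u\|_{L^\beta_\theta L^{\infty,\nu}_r}.
\end{equation*}

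For the main embedding $\dot W^{1,p}(\R^d) \hookrightarrow L^\beta_\theta L^{\infty,\nu}_r$, by density it suffices to take $u\in C_c^\infty(\R^d)$. Using $\nu\beta=d-1$, the target rewrites as
\begin{equation*}
\|u\|_{L^\beta_\theta L^{\infty,\nu}_r}^\beta = \int_{S^{d-1}}\sup_{r>0}\big(r^{d-1}|u(r\theta)|^\beta\big)\,d\theta.
\end{equation*}
For each fixed $\theta$, the fundamental theorem of calculus in $r$, applied to $s\mapsto s^{d-1}|u(s\theta)|^\beta$ (which vanishes at $s=0$), produces the pointwise bound
\begin{equation*}
r^{d-1}|u(r\theta)|^\beta \leq \int_0^\infty\Big((d-1)s^{d-2}|u|^\beta + \beta s^{d-1}|u|^{\beta-1}|\partial_s u|\Big)\,ds,
\end{equation*}
whose right-hand side no longer depends on $r$. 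Integrating in $\theta$ and converting to Cartesian coordinates via $dx=r^{d-1}\,dr\,d\theta$ then gives
\begin{equation*}
\|u\|_{L^\beta_\theta L^{\infty,\nu}_r}^\beta \lesssim \int_{\R^d}|x|^{-1}|u|^\beta\,dx + \int_{\R^d}|u|^{\beta-1}|\nabla u|\,dx.
\end{equation*}

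The main technical point is closing the estimate by bounding each term by $\|\nabla u\|_{L^p}^\beta$. The crucial arithmetic is $(\beta-1)p' = p^*$, where $p^*=dp/(d-p)$, so the power of $u$ paired with $|\nabla u|$ lands exactly on the Sobolev conjugate. Applying H\"older with the pair $(p,p')$ and then Sobolev $\dot W^{1,p}\hookrightarrow L^{p^*}$ yields
\begin{equation*}
\int_{\R^d}|u|^{\beta-1}|\nabla u|\,dx \leq \|\nabla u\|_{L^p}\,\|u\|_{L^{p^*}}^{\beta-1} \lesssim \|\nabla u\|_{L^p}^\beta,
\end{equation*}
and the same H\"older split together with Hardy's inequality $\||x|^{-1}u\|_{L^p}\lesssim\|\nabla u\|_{L^p}$ (valid for $1\leq p<d$) gives
\begin{equation*}
\int_{\R^d}|x|^{-1}|u|^\beta\,dx \leq \big\||x|^{-1}u\big\|_{L^p}\,\|u\|_{L^{p^*}}^{\beta-1} \lesssim \|\nabla u\|_{L^p}^\beta.
\end{equation*}
This closes the embedding; the only mild subtleties are the density approximation used to legitimize the FTC step and the endpoint form of Hardy needed at $p=1$, which can be obtained from the representation $u(x)=-\int_1^\infty x\cdot\nabla u(tx)\,dt$ followed by Fubini and rescaling.
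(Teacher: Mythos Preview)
Your proof is correct. The paper itself does not prove this lemma --- it is quoted without proof as Proposition~3.7 from Nakanishi \cite{Na99e} --- so there is no in-paper argument to compare against directly. Your route (fundamental theorem of calculus in the radial variable, followed by H\"older, Hardy, and Sobolev, exploiting the key identities $\nu\beta=d-1$ and $(\beta-1)p'=p^*$) is the natural one and is in the same spirit as Nakanishi's original argument. One small remark: at the endpoint $p=1$ one has $\beta=1$, so the H\"older split with exponents $(p,p')$ degenerates; however the two integrals then reduce simply to $\int_{\R^d}|x|^{-1}|u|\,dx$ and $\int_{\R^d}|\nabla u|\,dx$, and the first is controlled by the $L^1$ Hardy inequality exactly as you sketch via the representation $u(x)=-\int_1^\infty x\cdot\nabla u(tx)\,dt$.
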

Applying this lemma with $p=2$ to \eqref{eq3.6}, we have
\begin{equation}\label{eq3.8}
\begin{aligned}
\int_{\R^d}\frac{|u|^{2^*}}r dx
\leq&\|r^{\frac{d-2}2}u(r\theta)\|^{\frac4{d-2}}_{L^{\infty}_r
L^\beta_{\theta}}\int_0^\infty
r^{-3}\|u(r\cdot)\|_{H^1_\theta(\mathbb{S}^{d-1})}^{2}
r^{d-1} d r\\
 \leq&\|\nabla u\|^{\frac4{d-2}}_{L^2}\int_{\R^d}
\Big(\frac{|u|^{2}}{r^3} +\frac{|u_\theta|^2}{r}\Big) d x.
\end{aligned}
\end{equation}
Integrating \eqref{eq3.8}  with respect to time $t$ and using
\eqref{eq3.3}, we deduce that
\begin{eqnarray*}
\int_a^b\int_{\R^d}\frac{|u|^{2^*}}r dxdt\leq C(E).
\end{eqnarray*}
This  concludes Proposition \ref{Morawetz}.

\section{The potential energy concentration}

In this section, we will prove the potential energy concentration
for the almost periodic  solutions. The analog for the energy
super-critical wave equation was originally shown by Killip-Visan
\cite{KV11}.

\begin{proposition}[potential energy concentration]\label{pec}
 Assume that $u(t,x)$
is  an almost periodic solution to \eqref{equ1} with maximal
lifespan $I$. Then there exists a constant  $C=C(u)$ such that
\begin{equation}\label{pec1}
\int_J\int_{|x-x(t)|\leq
C/N(t)}|u(t,x)|^\frac{2d}{d-2}dxdt\gtrsim_u|J|
\end{equation}
uniformly for all interavals  $J=[t_1,t_2]\subset I$ with $t_2\geq
t_1+N(t_1)^{-1}.$
\end{proposition}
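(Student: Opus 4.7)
The proof I envisage proceeds in three steps: a local-in-time $L^{2^\ast}$ lower bound at the natural scale $1/N(t_0)$, spatial localization via \eqref{apsp}, and a covering argument. Concretely, my plan is first to prove that there exists $\varepsilon = \varepsilon(u) > 0$ such that, for every $t_0 \in I$ with $[t_0, t_0 + N(t_0)^{-1}] \subset I$,
\[ \int_{t_0}^{t_0 + N(t_0)^{-1}} \|u(t)\|_{L^{2^\ast}(\R^d)}^{2^\ast}\,dt \;\geq\; \varepsilon\, N(t_0)^{-1}. \]
The localized form (with the spatial integral restricted to $|x-x(t)| \leq C/N(t)$) follows immediately: choosing $C = C(\varepsilon/2)$ from \eqref{apsp} in Remark~\ref{rem1.1}, the tail $\int_{|x-x(t)|>C/N(t)}|u(t)|^{2^\ast}\,dx$ is at most $\varepsilon/2$ at each $t$, so the integrand loses at most $\varepsilon/(2N(t_0))$ upon restriction. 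Finally, for a general $J=[t_1,t_2]\subset I$ with $|J|\geq N(t_1)^{-1}$, I would decompose $J$ into disjoint sub-windows $[t_0^{(k)},\,t_0^{(k)}+\delta N(t_0^{(k)})^{-1}]$ via Lemma~\ref{localconstant} (so that $N(t)\sim N(t_0^{(k)})$ on each), sum the local bound over $k$, and absorb any trailing remainder into a constant multiple of $|J|$.

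The heart of the argument is the local lower bound, which I would prove by a contradiction + compactness + stability argument driven by almost periodicity. Assume the bound fails; then there is a sequence $t_n \in I$ with $[t_n,t_n+N(t_n)^{-1}]\subset I$ and $N(t_n) \int_{t_n}^{t_n + N(t_n)^{-1}}\|u(t)\|_{L^{2^\ast}}^{2^\ast}\,dt \to 0$. Using the symmetry \eqref{scale}, define the rescaled solutions
\[ v_n(s,y) \;=\; N(t_n)^{-(d-2)/2}\, u\bigl(t_n + s/N(t_n),\; x(t_n) + y/N(t_n)\bigr); \]
each $v_n$ solves \eqref{equ1} on $[0,1]$ with $\int_0^1\|v_n(s)\|_{L^{2^\ast}}^{2^\ast}\,ds\to 0$. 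By almost periodicity of $u$, the sequence $\{(v_n(0),\partial_s v_n(0))\}$ is precompact in $\dot H^1_x\times L^2_x$; passing to a subsequence, $(v_n(0),\partial_s v_n(0))\to(\phi_0,\phi_1)$ in this space. Local well-posedness (Theorem~\ref{local}) together with the stability Lemma~\ref{long} provides a time $T>0$, depending only on $\|(\phi_0,\phi_1)\|_{\dot H^1\times L^2}$, on which $v_n\to\phi$ in $S^1([0,T])$, where $\phi$ is the solution of \eqref{equ1} with data $(\phi_0,\phi_1)$. In particular $v_n(s)\to\phi(s)$ in $\dot H^1_x\hookrightarrow L^{2^\ast}_x$ uniformly for $s\in[0,T]$, so
\[ \int_0^T \|\phi(s)\|_{L^{2^\ast}}^{2^\ast}\,ds \;=\; \lim_{n\to\infty}\int_0^T \|v_n(s)\|_{L^{2^\ast}}^{2^\ast}\,ds \;\leq\; \lim_{n\to\infty}\int_0^1 \|v_n(s)\|_{L^{2^\ast}}^{2^\ast}\,ds \;=\; 0. \]
Hence $\phi\equiv 0$ on $[0,T]\times\R^d$, which forces $(\phi_0,\phi_1)=(0,0)$. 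On the other hand, the energy is scale-invariant under \eqref{scale} and conserved by \eqref{equ1}, so $E(v_n(0),\partial_s v_n(0))=E(u_0,u_1)>0$ for every $n$ (the original $u$ being non-trivial); continuity of $E$ on $\dot H^1_x\times L^2_x$ (using Sobolev and Hardy--Littlewood--Sobolev for the Hartree term) then forces $E(v_n(0),\partial_s v_n(0))\to E(0,0)=0$, the desired contradiction.

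The principal obstacle lies in the contradiction step: one needs a uniform (in $n$) existence time $T>0$ on which $v_n$ converges to $\phi$, and the convergence must be strong enough to pass to the limit inside the $L^{2^\ast}$-in-time norm. Both requirements come from the interplay between the compactness supplied by almost periodicity (for the initial data) and Lemma~\ref{long} (which upgrades $\dot H^1\times L^2$ convergence of data to $S^1$ convergence of the flows on an interval depending only on the size of the data). The covering argument in the last step is a minor technicality handled by Lemma~\ref{localconstant}, and the continuity of the Hartree functional is standard from Hardy--Littlewood--Sobolev.
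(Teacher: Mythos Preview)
Your argument is correct and takes a genuinely different route from the paper's. The paper reduces (via Lemma~\ref{localconstant}) to short windows $J(t_0)=[t_0,t_0+\delta N(t_0)^{-1}]$ and then proves the localized lower bound through a chain of quantitative reductions: Bernstein's inequality converts the spatial/frequency localization into a fractional-derivative bound; Lemma~\ref{ljtdp} (the $L^{2^\ast}$-nontriviality lemma quoted from \cite{KV11}) produces a set of measure $\gtrsim N(t_0)^{-1}$ on which $\|u(t)\|_{L^{2^\ast}}$ is bounded below; H\"older on that set together with the spacetime-bound Lemma~\ref{fspssn} matches upper and lower bounds on $\int_{J(t_0)}\!\int\bigl||\nabla|^{(d-2)/2d}u\bigr|^{2d/(d-2)}$; and a final interpolation step absorbs the error from truncating in space and frequency.

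Your compactness--contradiction argument is considerably more direct: it effectively reproves (an integrated form of) Lemma~\ref{ljtdp} and at the same time removes the need for Lemma~\ref{fspssn}, the Bernstein/projection machinery, and the interpolation step; the spatial localization via \eqref{apsp} and the covering by local-constancy windows then finish the job cleanly. The trade-off is that your proof is soft---the constant $\varepsilon(u)$ comes from compactness and is not explicit---whereas the paper's route stays closer to the quantitative \cite{KV11} framework. Two minor caveats worth tightening in your write-up: in the energy-critical setting the local existence time $T$ depends on the \emph{profile} of $(\phi_0,\phi_1)$, not merely its norm (this is harmless, since you only need some $T>0$ for the fixed limit data and $T\le\delta$); and Lemma~\ref{long} as stated gives smallness of $u-\tilde u$ only in $S(I)$, not in $S^1(I)$, so the $C_t\dot H^1_x$ convergence you invoke comes from the \emph{proof} of that lemma---alternatively, convergence in $S([0,T])$ gives a.e.\ convergence along a subsequence, and Fatou already yields $\int_0^T\|\phi(s)\|_{L^{2^\ast}}^{2^\ast}\,ds=0$.
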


Before we prove the above lemma, we recall a lemma in \cite{KV11}.
It says that although it is not possible to obtain the lower bounds
on the norm of $u(t)$ for a single time $t$ as the nonlinear wave
equation relies on two independent initial data, this phenomenon
must be rare as follows.

\begin{lemma}[\cite{KV11}]\label{ljtdp} Assume that $u(t,x):I\times\R^d\to\R$
is  an almost periodic solution to \eqref{equ1}. Then, for any
$A>0$, there exists $\eta=\eta(u,A)>0$ such that
\begin{equation}
\big|\{t\in[t_0,t_0+AN(t_0)^{-1}]\cap I:~\|u(t)\|_{L_x^\frac{2d}{d-2}}\geq\eta\}\big|\geq\eta
N(t_0)^{-1}.
\end{equation}
\end{lemma}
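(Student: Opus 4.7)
The plan is a contradiction-plus-rescaling argument driven by the precompactness guaranteed by almost periodicity. Assume the conclusion fails for some $A>0$; then one can extract sequences $t_n\in I$ and $\eta_n\searrow 0$ satisfying
\[
\bigl|\{t\in[t_n,t_n+AN(t_n)^{-1}]\cap I:\|u(t)\|_{L_x^{2d/(d-2)}}\geq\eta_n\}\bigr|<\eta_n N(t_n)^{-1}.
\]
Using the scaling \eqref{scale} and translation invariance of \eqref{equ1}, set
\[
u_n(t,x):=N(t_n)^{-(d-2)/2}\,u\bigl(t_n+t/N(t_n),\,x(t_n)+x/N(t_n)\bigr),
\]
which is itself a solution to \eqref{equ1} with $N_n(0)=1$, $x_n(0)=0$, inheriting the compactness modulus of $u$. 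A change of variables shows the measure condition rescales to
\[
\bigl|\{t\in[0,A]\cap I_n:\|u_n(t)\|_{L_x^{2d/(d-2)}}\geq\eta_n\}\bigr|<\eta_n.
\]

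By Remark \ref{rem1.1}, the set $\{(u_n(0),\partial_t u_n(0))\}_n$ is precompact in $\dot H^1_x\times L^2_x$, so a subsequence converges to some $(\phi_0,\phi_1)$ in that space. Let $u_\infty$ be the solution of \eqref{equ1} with data $(\phi_0,\phi_1)$ at $t=0$. The local constancy Lemma \ref{localconstant} ensures that $[0,\delta]\subset I_n$ uniformly for some $\delta=\delta(u)>0$, and local well-posedness (Theorem \ref{local}) together with the stability Lemma \ref{long} upgrade the convergence of the data to $u_n\to u_\infty$ in $S^1([0,T_*])$ for some $T_*=T_*(u)\in(0,\min(A,\delta)]$. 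In particular, taking $\mu=1$, $q=\infty$, $r=2$ in \eqref{suoyimdy} gives control of the $L_t^\infty\dot H^1_x$ norm, and Sobolev embedding $\dot H^1_x\hookrightarrow L_x^{2d/(d-2)}$ then yields $u_n\to u_\infty$ in $C([0,T_*];L_x^{2d/(d-2)})$.

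Combining this uniform convergence with the rescaled measure estimate: on a subset of $[0,T_*]$ of measure at least $T_*-\eta_n$ one has $\|u_n(t)\|_{L_x^{2d/(d-2)}}<\eta_n\to 0$. Passing to the limit gives $\|u_\infty(t)\|_{L_x^{2d/(d-2)}}=0$ for a.e. $t\in[0,T_*]$, and by continuity of $t\mapsto u_\infty(t)$ into $\dot H^1_x$ this forces $u_\infty\equiv 0$ on $[0,T_*]\times\R^d$; hence $(\phi_0,\phi_1)=(0,0)$.

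To close, observe that by scaling $\|(u_n(0),\partial_t u_n(0))\|_{\dot H^1_x\times L^2_x}=\|(u(t_n),\partial_t u(t_n))\|_{\dot H^1_x\times L^2_x}$. Since the lemma will be applied to almost periodic blowup solutions as in Theorem \ref{three}, the small-data global existence in Theorem \ref{local}(5) prevents this norm from dropping below a positive constant $c_0=c_0(u)$: otherwise, starting from $t_n$ the solution would scatter, contradicting the blowup hypothesis. Thus $\|(\phi_0,\phi_1)\|_{\dot H^1_x\times L^2_x}\geq c_0>0$, contradicting $(\phi_0,\phi_1)=(0,0)$. The main technical hurdle is precisely the uniform-in-time $L_x^{2d/(d-2)}$ convergence of $u_n\to u_\infty$, since the stability lemma naturally delivers Strichartz-type convergence rather than pointwise-in-$t$ statements; the remedy is to invoke the $L_t^\infty\dot H^1_x$ component of $S^1$ and then apply Sobolev embedding.
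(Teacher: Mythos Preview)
The paper does not supply its own proof of this lemma; it is quoted from Killip--Visan \cite{KV11}. Your contradiction-plus-rescaling-plus-compactness scheme is the standard route and matches the argument in \cite{KV11}, so the overall architecture is correct.

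Two points deserve tightening. First, Lemma \ref{long} as stated yields only $\|u_n-u_\infty\|_{S(I)}\leq C\epsilon^c$ together with a \emph{uniform bound} $\|u_n-u_\infty\|_{S^1(I)}\leq C(M,E)$; it does not literally give convergence in $S^1$. Your proposed remedy---``invoke the $L_t^\infty\dot H^1_x$ component of $S^1$''---therefore does not follow from the lemma's conclusions alone. The correct fix is to return to Duhamel after the stability argument has produced $\|u_n-u_\infty\|_{X([0,T_*])}\to 0$ (which is explicit in the proof of Lemma \ref{long}); Strichartz and Lemma \ref{nonle} then give
\[
\|u_n-u_\infty\|_{L_t^\infty\dot H^1_x([0,T_*])}\lesssim \|(u_n(0)-\phi_0,\partial_t u_n(0)-\phi_1)\|_{\dot H^1\times L^2}+\bigl(\|u_n\|_X+\|u_\infty\|_X\bigr)^2\|u_n-u_\infty\|_X\to 0,
\]
and Sobolev embedding delivers the uniform-in-$t$ $L_x^{2d/(d-2)}$ convergence you need.

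Second, your closing step invokes the blowup hypothesis from Theorem \ref{three} to rule out $(\phi_0,\phi_1)=(0,0)$. That is adequate for the paper's purposes, but it narrows the lemma's scope unnecessarily. A cleaner contradiction comes from energy conservation: if $(\phi_0,\phi_1)=(0,0)$ then $\|(u(t_n),u_t(t_n))\|_{\dot H^1\times L^2}\to 0$, and since the potential energy is controlled (via Hardy--Littlewood--Sobolev and Sobolev) by $\|\nabla u(t_n)\|_2^4\to 0$, the conserved energy \eqref{eq1.4} must vanish, forcing $u\equiv 0$. This handles every nontrivial almost periodic solution, not merely the blowup ones.
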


{\bf The proof of Proposition \ref{pec}:} In view of Lemma
\ref{localconstant}, it suffices to show \eqref{pec1} for intervals
of the form $[t_0,t_0+\delta N(t_0)^{-1}]$ for some small fixed
$\delta>0.$ We denote $J(t_0):=[t_0,t_0+\delta N(t_0)^{-1}]$.

Furthermore,  \eqref{pec1} can be reduced to prove that there exists
$C=C(u)$ such that
\begin{equation}\label{pecjz1}
\int_{J(t_0)}\int_{|x-x(t)|\leq
C/N(t)}|u|^\frac{2d}{d-2}dxdt\gtrsim_u N(t_0)^{-1}\simeq_u |J(t_0)|,
\end{equation}
since  $J(t_0)=[t_0,t_0+\delta N(t_0)^{-1}]$. On the other hand,
this follows from \eqref{apsp} and Lemma \ref{ljtdp}
\begin{equation}
\bigg|\Big\{t\in
J(t_0):~\int_{|x-x(t)|\leq\frac{C}{N(t)}}|u(t,x)|^\frac{2d}{d-2}dx\gtrsim_u
1\Big\}\bigg|\gtrsim N(t_0)^{-1}\sim_u |J(t_0)|.
\end{equation}
Thus, we conclude the proof of Proposition \ref{pec}.

\section{The soliton-like solution and frequency cascade solution}
In this section, we will use the Morawetz estimate and the potential
energy concentration established in the above section  to preclude
the soliton-like solution and low-to-high cascade solution in the
sense of Theorem \ref{three}. More precisely, we will prove
\begin{theorem}\label{sf}
There are no global solutions to \eqref{equ1} that are soliton-like
or low-to-high cascade in the sense of Theorem \ref{three}.
\end{theorem}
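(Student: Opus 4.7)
The strategy is to contradict the global Morawetz bound of Proposition~\ref{Morawetz}, which provides a universal finite upper bound $C(E)$ on the integral $\int_\R\int_{\R^d}|u|^{\frac{2d}{d-2}}/|x|\,dx\,dt$ along the entire lifespan $I=\R$. A matching lower bound that diverges with the length of the time window will come from combining the potential energy concentration of Proposition~\ref{pec} with a sub-light-speed estimate on the spatial center $x(t)$ extracted from the extended causality (Lemma~\ref{extend}); both ingredients are available because in the two scenarios $N(t)\ge 1$ uniformly, so the concentration ball $B(x(t),C/N(t))$ has radius bounded by a fixed constant.

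Step 1 -- Sub-light-speed control on $x(t)$. After translating so that $x(0)=0$, I would truncate the initial data to $B(0,R(\eta))$ using a smooth cutoff, with $R(\eta)$ supplied by \eqref{apss}. The compactly supported piece evolves, by Lemma~\ref{extend}, into a solution supported in $B(0,R(\eta)+|t|)$, while the remaining small-energy tail can be tracked by the stability Lemma~\ref{long} using the Strichartz bounds on $u$ from Lemma~\ref{fspssn}. Matching the bulk of $u(t)$ -- which by almost-periodicity is concentrated in $B(x(t),C(\eta))$ -- with this truncated-data approximation yields
\begin{equation*}
|x(t)|\lesssim_u 1+|t|, \qquad t\in\R.
\end{equation*}

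Step 2 -- Dyadic Morawetz contradiction. Set $J_k=[2^k,2^{k+1}]$ for $k\ge 0$. Since $|J_k|=2^k\ge 1\ge 1/N(2^k)$, Proposition~\ref{pec} gives
\begin{equation*}
\int_{J_k}\int_{|x-x(t)|\le C/N(t)}|u(t,x)|^{\frac{2d}{d-2}}\,dx\,dt\gtrsim_u 2^k,
\end{equation*}
while Step~1 combined with $C/N(t)\le C$ shows that $|x|\le |x(t)|+C/N(t)\lesssim_u 2^k$ on the concentration region, so the Morawetz weight obeys $1/|x|\gtrsim_u 2^{-k}$ there. Inserting this pointwise bound,
\begin{equation*}
\int_{J_k}\int_{\R^d}\frac{|u(t,x)|^{\frac{2d}{d-2}}}{|x|}\,dx\,dt\gtrsim_u 1.
\end{equation*}
Summing over $k=0,\dots,K-1$ produces a left-hand side of order $K$, contradicting the universal Morawetz bound $C(E)$ once $K$ is sufficiently large. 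This rules out both the soliton-like and the low-to-high cascade scenarios simultaneously.

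The main technical obstacle lies in Step~1, where Lemma~\ref{extend} applies verbatim only to compactly supported data, so the stability Lemma~\ref{long} must be invoked to pass back to $u$. The stability constants depend on the Strichartz norm of the almost-periodic solution, which Lemma~\ref{fspssn} controls only through $\int N(t)\,dt$; consequently the truncation radius $R(\eta)$ has to be chosen as a function of the time horizon $T$, and to obtain $|x(t)|\lesssim 1+|t|$ one must iterate the truncation on intervals of bounded Strichartz content and keep $R(T)$ linear in $T$. This is particularly delicate in the cascade case, where $N(t)$ may spike and push $\int_0^T N(t)\,dt$ above $T$; the iteration then has to be performed on time slices adapted to the local frequency scale via Lemma~\ref{localconstant}. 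The rest of the proof, namely the verification that the concentration lower bound dominates the Morawetz weight on each dyadic block, is essentially book-keeping.
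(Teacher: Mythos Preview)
Your proposal is correct and follows essentially the same approach as the paper: combine the Morawetz bound (Proposition~\ref{Morawetz}), the potential energy concentration (Proposition~\ref{pec}), and a sub-light-speed bound $|x(t)|\lesssim_u 1+|t|$ coming from the extended causality to produce a logarithmically divergent lower bound on the Morawetz integral. The paper uses unit intervals $[k,k+1]$ rather than dyadic ones and simply cites \cite{KV11} for the $x(t)$ control (stating the slightly sharper two-point bound $|x(t_1)-x(t_2)|\le|t_1-t_2|+CN(t_1)^{-1}+CN(t_2)^{-1}$), whereas your more detailed discussion of iterating the stability Lemma~\ref{long} on local-constancy intervals is indeed warranted here since Lemma~\ref{extend} applies only to compactly supported data.
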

\begin{proof}
It follows from Proposition \ref{pec}  that there exists a constant
$C=C(u)$ such that
\begin{equation}\label{pec123}
\int_J\int_{|x-x(t)|\leq
C/N(t)}|u(t,x)|^\frac{2d}{d-2}dxdt\gtrsim_u|J|
\end{equation}
uniformly for all interavals  $J=[t_1,t_2]\subset \R$ with $t_2\geq
t_1+N(t_1)^{-1}.$

On the other hand, by  the extend causality in Lemma \ref{extend}
and \eqref{apsp}, we have the following control on the spatial
center function $x(t)$
\begin{equation}\label{xtgj}
|x(t_1)-x(t_2)|\leq
|t_1-t_2|+CN(t_1)^{-1}+CN(t_2)^{-1},~\forall~t_1,t_2\in\R.\end{equation}
The similar result for the energy-supercritical
 wave equation can be found in
\cite{KV11}. Translating space so that $x(0)=0$, we obtain by the
Morawetz estimate \eqref{morawetzes}, $N(t)\geq1$ and
\eqref{pec123}, for $T\geq1$,
\begin{align*}
C(E)&\geq\int_{0}^T\int_{\R^d}\frac{|u(t,x)|^{\frac{2d}{d-2}}}{|x|}dxdt\\
&\geq
\int_{0}^T\int_{|x-x(t)|\leq\frac{C}{N(t)}}\frac{|u(t,x)|^{\frac{2d}{d-2}}}{|x|}dxdt\\
&\gtrsim_u\sum_{k=0}^{[T-1]}\frac1{1+k}\int_k^{k+1}\int_{|x-x(t)|\leq\frac{C}{N(t)}}|u(t,x)|^{\frac{2d}{d-2}}dxdt\\
&\gtrsim_u\sum_{k=0}^{[T-1]}\frac1{1+k}\simeq\ln{(1+ T )}.
\end{align*}
Choosing $T$ sufficiently large depending on $u$, we derive a
contradiction. Thus, we exclude the global almost periodic solutions
in the sense of Theorem \ref{three}.

\end{proof}

\section{The finite time blowup}
In this section,  we preclude the finite time blowup solutions in
the sense of Theorem \ref{three}. We adopt the Morawetz estimate to
get a contradiction.

\begin{theorem}[Absence of finite-time blowup
 solutions]\label{blowup}  There are no finite-time blowup solutions to
 \eqref{equ1} in the sense of Theorem \ref{three}.
\end{theorem}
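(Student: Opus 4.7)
The plan is to derive a direct contradiction from the Morawetz estimate of Proposition \ref{Morawetz}, exploiting the fact that the weight $1/|x|$ becomes large precisely where the compact support of Lemma \ref{jzj} forces the solution to live. I would combine three ingredients already in hand: the compact support $\mathrm{supp}\, u(t,\cdot) \subset B(0, 1-t)$, the lower bound $N(t) \geq C/(1-t)$ of Lemma \ref{finitetime}, and the uniform lower bound of Proposition \ref{pec}.

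The starting observation is that by compact support, whenever $u(t,x) \neq 0$ one has $|x| \leq 1-t$, so
$$\int_{\R^d} \frac{|u(t,x)|^{2^*}}{|x|}\, dx \;\geq\; \frac{1}{1-t} \int_{\R^d} |u(t,x)|^{2^*}\, dx.$$
Integrating over a subinterval $J = [t_1, t_2] \subset [0,1)$ and using $1/(1-t) \geq 1/(1-t_1)$ for $t \in J$,
$$\int_J \int_{\R^d} \frac{|u|^{2^*}}{|x|}\, dx\, dt \;\geq\; \frac{1}{1-t_1} \int_J \int_{\R^d} |u|^{2^*}\, dx\, dt.$$
Choosing $J$ with $|J| = \alpha(1-t_1)$ for a fixed $\alpha \in (0,1)$, Lemma \ref{finitetime} gives $|J| \geq \alpha C/N(t_1) \geq N(t_1)^{-1}$ as soon as $\alpha \geq 1/C$. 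Hence Proposition \ref{pec} applies and yields
$$\int_J \int_{\R^d} |u|^{2^*}\, dx\, dt \;\gtrsim_u\; |J| \;=\; \alpha(1-t_1),$$
so each such $J$ contributes a uniform constant $\gtrsim_u \alpha$ to the Morawetz integral, independent of $t_1$.

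To finish, I would iterate along a self-similar sequence $\{t_n\}$ defined by $1-t_n = (1-\alpha)^n(1-t_*)$, with $t_* \in I$ chosen close enough to $1$ so that Lemma \ref{finitetime} applies throughout $[t_*, 1)$. The intervals $J_n = [t_n, t_{n+1}]$ are disjoint, contained in $[t_*, 1)$, each of length $\alpha(1-t_n)$, and each automatically satisfies the hypothesis of Proposition \ref{pec} by the previous paragraph. Summing the uniform contributions,
$$\int_{t_*}^1 \int_{\R^d} \frac{|u|^{2^*}}{|x|}\, dx\, dt \;\geq\; \sum_{n=0}^{\infty} \int_{J_n} \int_{\R^d} \frac{|u|^{2^*}}{|x|}\, dx\, dt \;=\; +\infty,$$
which contradicts the bound $C(E)$ of Proposition \ref{Morawetz}.

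The only technical point is verifying the compatibility $|J_n| \geq N(t_n)^{-1}$ on each self-similar interval, which is handled once and for all by choosing $\alpha \geq 1/C$. There is no substantive obstacle beyond the ingredients already assembled: compact support converts the Morawetz weight $1/|x|$ into $1/(1-t)$ on the support, and the concentration property then forces the integrand into a divergent geometric sum. The conceptually delicate work lives upstream, in establishing the Morawetz estimate without classical finite speed of propagation, the lower bound on $N(t)$, and the compact support via the extended causality of Lemma \ref{extend}.
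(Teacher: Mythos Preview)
Your argument is correct and essentially identical to the paper's: both use the compact support from Lemma \ref{jzj} to convert the Morawetz weight $1/|x|$ into $1/(1-t)$, decompose $[t_*,1)$ into geometrically shrinking intervals, and apply Proposition \ref{pec} on each to produce a divergent sum contradicting Proposition \ref{Morawetz}. The paper simply fixes $\alpha=1/2$ (dyadic intervals $[1-2^{-k},1-2^{-(k+1)}]$) and tracks the partial sum as $\simeq -\log_2(1-T)$, whereas you phrase the same computation with a general ratio $\alpha$ and sum directly to infinity; the content is the same.
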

\begin{proof}
We argue by contradiction. Assume that there exists a solution $u:
I\times\R^d\to\R$ which is a finite time blowup in the sense of
Theorem \ref{three}. By the time-reversal and time-translation
symmetries, we may assume that the solution blows up as
$t\searrow0=\inf I.$

For $T\in\big(0,\sup I\big)$, using Morawetz estimate and splitting
the integral into a sum of integrals over intervals of local
constancy,   we derive that
\begin{align}\nonumber
C(E)&\geq\int^{\sup
I}_T\int_{\R^d}\frac{|u(t,x)|^{\frac{2d}{d-2}}}{|x|}dxdt\\\nonumber
&\geq\sum_{j:~I_j\subset(T,\sup I)}
\int_{I_j}\int_{\R^d}\frac{|u(t,x)|^{\frac{2d}{d-2}}}{|x|}dxdt\\\label{ceit}
&\geq\sum_{j:~I_j\subset(T,\sup I)} \int_{I_j}\int_{|x-x(t)|\leq
C/N(t)}\frac{|u(t,x)|^{\frac{2d}{d-2}}}{|x|}dxdt,
\end{align}
where $I_j=[t_j,t_{j+1}]\subset(T,\sup I)$ is an interval of local
constancy as in Lemma \ref{localconstant}. On the other hand, by the similar argument as (4.2) in \cite{KV11}, we have the
control for $x(t)$ by
\begin{equation}\label{xt123}
|x(t_1)-x(t_2)|\leq|t_1-t_2|+\frac{C_u}{N(t_1)}+\frac{C_u}{N(t_2)},\quad\forall~t_1,~t_2\in(0,\sup I).
\end{equation}
This together with Corollary \ref{finitetime} yields that
$$|x(t_1)-x(t_2)|\leq|t_1-t_2|+\frac{C_u}{N(t_1)}+\frac{C_u}{N(t_2)}\to0,\quad\text{as}\quad t_1,~t_2\to 0+$$
which means the limit $\lim\limits_{t\to0+}x(t)$ exists. Thus, the
space translation symmetry implies that we can assume
\begin{equation}\label{xt0}
\lim_{t\to0+}x(t)=0.
\end{equation}Combining this with \eqref{xt123}, we deduce  that on such an
 interval of local constancy $I_j$, we have
 $$|x(t)|+\frac{C}{N(t)}\lesssim_u t_j+\frac{C}{N(t_j)},~\forall~t\in I_j.$$
This together with \eqref{ceit}, Proposition \ref{pec} and Corollary
\ref{finitetime}: $N(t)\geq\frac{C}{t}$ implies that
\begin{align*}
C(E)&\geq\sum_{j:~I_j\subset(T,\sup I)} \int_{I_j}\int_{|x-x(t)|\leq
C/N(t)}\frac{|u(t,x)|^{\frac{2d}{d-2}}}{|x|}dxdt\\
\gtrsim_u&\sum_j\frac1{t_j+\frac{C}{N(t_j)}}\int_{I_j}\int_{|x-x(t)|\leq
C/N(t)}|u(t,x)|^{\frac{2d}{d-2}}dxdt\\
\gtrsim_u&\sum_j\frac1{t_j}|I_j|\\
\gtrsim_u&\int^{\sup I}_T\frac{dt}{t}.
\end{align*}
And so  we derive a contradiction by taking $T$ close to $0$
depending on $u$. Thus, we exclude the finite time blowup solutions
in the sense of Theorem \ref{three}.

\end{proof}

\textbf{Acknowledgements}  The authors would like to thank Professor Terence Tao and anonymous referee for their
comments  which helped improve the paper greatly. The authors were supported by
the NSF of China under grant No.11171033, 11231006.

\begin{center}

\end{center}

\end{document}